\newcolumntype{L}{>{\displaystyle}l}
\newcolumntype{C}{>{\displaystyle}c}
\newcolumntype{R}{>{\displaystyle}r}
\newcommand{\R}{\ensuremath{\mathbb{R}}}
\newcommand{\N}{\ensuremath{\mathbb{N}}}
\newcommand{\Z}{\ensuremath{\mathbb{Z}}}
\newcommand{\CC}{\mathcal{C}}
\newcommand{\CA}{\ensuremath{\mathcal{A}}}
\newcommand{\CR}{\ensuremath{\mathcal{R}}}
\newcommand{\CF}{\ensuremath{\mathcal{F}}}
\newcommand{\CG}{\ensuremath{\mathcal{G}}}
\newcommand{\CH}{\ensuremath{\mathcal{H}}}
\newcommand{\CM}{\ensuremath{\mathcal{M}}}
\newcommand{\CT}{\ensuremath{\mathcal{T}}}
\newcommand{\CO}{\ensuremath{\mathcal{O}}}
\newcommand{\CS}{\ensuremath{\mathcal{S}}}
\newcommand{\s}{\ensuremath{\mathbb{S}}}
\newcommand{\ov}{\overline}
\newcommand{\la}{\lambda}
\newcommand{\g}{\gamma}
\newcommand{\G}{\Gamma}
\newcommand{\T}{\theta}
\newcommand{\al}{\alpha}
\newcommand{\si}{\sigma}
\newcommand{\sgn}{\mathrm{sign}}
\newcommand{\de}{\delta}
\newcommand{\fix}{\mathrm{Fix}}
\def\p{\partial}
\def\e{\varepsilon}
\newtheorem {theorem} {Theorem} 
\newtheorem {proposition} [theorem] {Proposition}
\newtheorem {corollary} [theorem] {Corollary}
\newtheorem {lemma} [theorem] {Lemma}
\newtheorem {remark}[theorem]{Remark}
\newtheorem {mtheorem} {Theorem}
\begin{document}

\title[Periodic perturbations of a two-fold cycle in reversible Filippov systems]
{Study of periodic orbits in periodic perturbations of\\ planar reversible Filippov systems\\ having a two-fold cycle}

\author[D.D.Novaes, T.M. Seara, M.A. Teixeira and I.O. Zeli]
{Douglas D. Novaes$^{1}$, Tere M. Seara$^2$, Marco A. Teixeira$^{1}$ and Iris O. Zeli$^{3}$}

\address{$^1$ Departamento de Matem\'{a}tica, Universidade
Estadual de Campinas, Rua S\'{e}rgio Buarque de Holanda, 651, Cidade
Universit\'{a}ria Zeferino Vaz, 13083--859, Campinas, SP, Brazil}
\email{ddnovaes@unicamp.br}
\email{teixeira@unicamp.br}

\address{$^2$ Departament de Matem\`{a}tica Aplicada I,
Universitat Polit\`{e}cnica de Catalunya, Diagonal 647, 08028 Barcelona, Spain} \email{tere.m-seara@upc.edu}

\address{$^3$ Departamento de Matem\'{a}tica, Instituto Tecnol\'ogico de Aeron\'autica, Pra\c{c}a Marechal Eduardo Gomes, 50, Vila das Ac\'acias, 12228--900, S\~ao Jos\'e dos Campos, SP, Brazil}
\email{iriszeli@ita.br}

\subjclass[2010]{34A36, 34C23, 37G15}

\keywords{piecewise smooth differential systems, Filippov systems, two-fold singularity, periodic solutions, sliding dynamics}

\maketitle

\begin{abstract}
We study the existence of periodic solutions in a class of planar Filippov systems obtained 
from non-autonomous periodic perturbations of reversible piecewise smooth differential systems. It is assumed that the unperturbed system presents a simple two-fold cycle, which is characterized by a closed trajectory connecting a visible  two-fold singularity to itself. 
It is shown that under certain generic conditions the perturbed system has
sliding and crossing periodic solutions. 
In order to get our results, Melnikov's ideas were applied together with  tools from the geometric singular perturbation theory. 
Finally, a study of a perturbed piecewise Hamiltonian model is performed.
\end{abstract}

\section{Introduction}\label{intro}

Over the last decade, the theory of non-smooth dynamical systems has been developed at a very fast pace, with growing importance at the frontier between mathematics, physics, engineering, and the life sciences (see, for instance,  \cite{CRM,BBCK,Mike18,Var}, and references therein). The study of such systems goes back to the work of Andronov et. al \cite{AndronovEtAl66}  in 1937. A rigorous mathematical formalization of this theory was provided by Filippov \cite{F} in 1988, who used the theory of differential inclusions for establishing the definition of trajectory for non-smooth differential systems. Nowadays, such systems are called Filippov systems.

In 1981, motivated by the work of Ekeland \cite{E} on discontinuous Hamiltonian vector fields, Teixeira \cite{T} studied generic singularities of refractive non-smooth vector fields. It was performed a qualitative analyses of two-fold singularities appearing in these systems. Later, the generic classification of such singularities has been approached in several works \cite{GST, K,KRG}. 

Recently, many efforts have been dedicated to understand some typical global minimal sets in Filippov systems (see, for instance, \cite{AJMT,AGN,LH,NPV,NT,NTZ}) . In particular, Novaes et al. \cite{NTZ} studied the unfolding of a {\it Simple Two-Fold Cycle} (see Figure \ref{piecewise}) inside the class of autonomous planar Filippov systems.
A {\it Simple Two-Fold Cycle} is characterized by a closed trajectory connecting a two-fold singularity to itself and having a non-constant first return map defined in one side of the cycle (see Figure \ref{piecewise}).

The present study focuses on understanding how a simple two-fold cycle unfolds under small periodic perturbations. More specifically, 
 we are mainly concerned with sliding and crossing periodic solutions bifurcating from a simple two-fold cycle of  a $R$-reversible planar Filippov system periodically perturbed. By $R$-reversibility of a Filippov system,
\begin{equation}
\label{eq0}
Z_0(x,y)=\left\{\begin{array}{l} F^+(x,y) \quad \textrm{if} \quad y>0,\vspace{0.2cm}\\
F^-(x,y) \quad \textrm{if}\quad y<0,
\end{array}\right.
\end{equation}
we mean $F^+(x,y)=-RF^-R(x,y),$ where $R: \R^2 \rightarrow \R^2$ is an involution for which $y=0$ is the set of fixed points (see \cite{JTT}). Here, we shall consider $R(x,y)=(x,-y)$. For this involution, the $R$-reversibility implies that $F^+(x,y)=(-F_1(x,-y),F_2(x,-y))$ and $F^-(x,y)=(F_1(x,y),F_2(x,y))$.  As a consequence of the $R$-reversibility, a Simple Two-Fold Cycle $\CS$ of \eqref{eq0} is always a boundary of a period annulus $\mathcal{A}$ of crossing periodic solutions. Here, we shall assume that $\CS$ encloses such a period annulus  (see Figure \ref{piecewise}).

As examples of piecewise smooth differential systems satisfying the  hypotheses above, we have the following one-parameter family of piecewise Hamiltonian differential systems, 
\begin{equation}\label{example0}
Z_0^\al(x,y)= \big((1,x^2-\al),(-1,x^2-\al)\big), \quad \al>0,
\end{equation} with Hamiltonian function given by
\[
H(x,y)=|y|-\dfrac{x^3}{3}+\al x.
\]
The vector field $Z_0^\al$ contains a simple two-fold cycle $\CS$ connecting the visible two-fold singularity $(\sqrt{\al},0)$ to itself.  This cycle encloses an annulus $\CA$ fulfilled with crossing periodic orbits (see Figure \ref{piecewise}). 

\begin{figure}[H]
\begin{center}
\begin{overpic}[width=7cm]{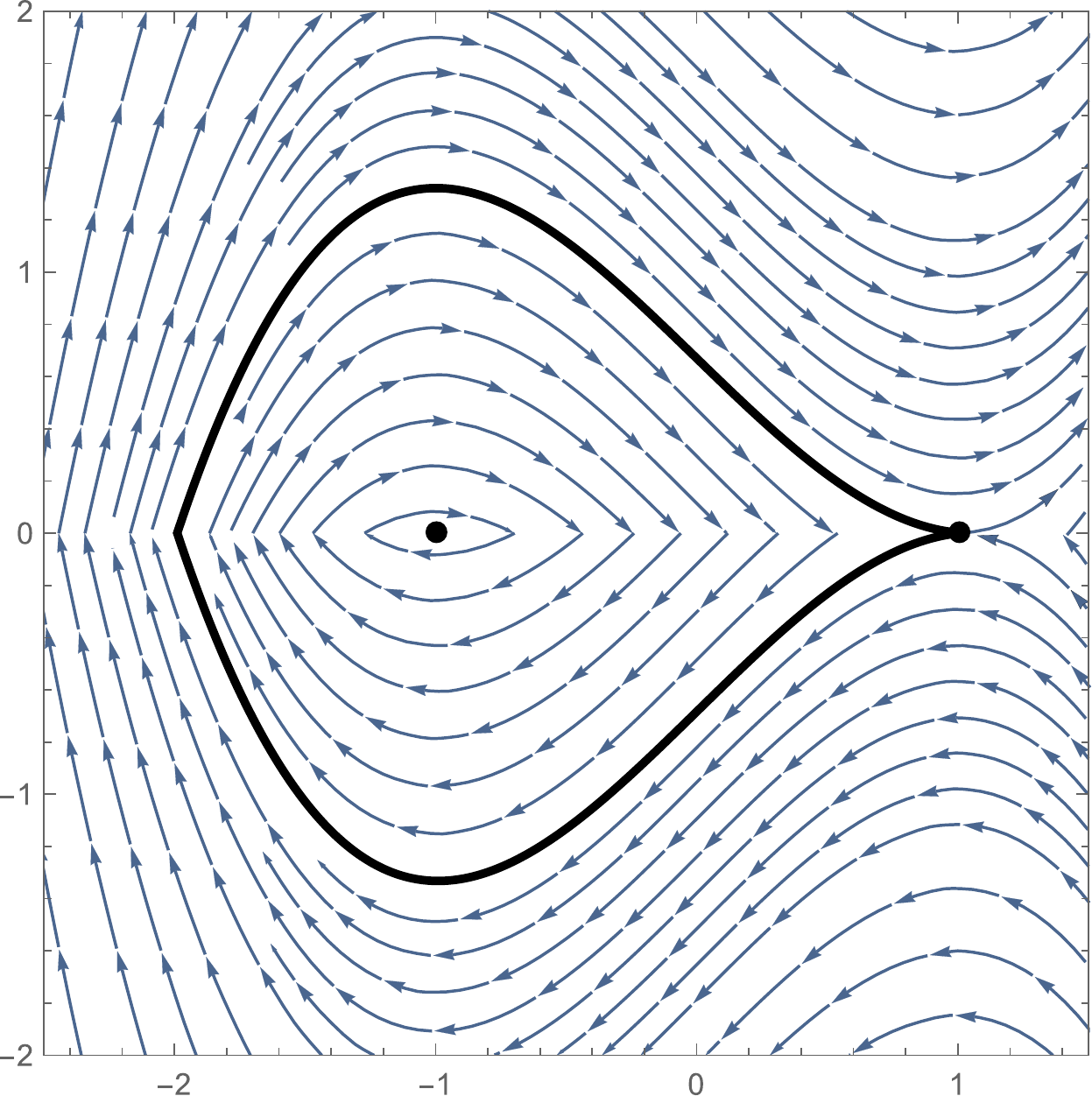}
\end{overpic}
\end{center}

\caption{Phase space of the piecewise smooth differential system $(\dot x,\dot y)^T=Z_0^\al(x,y)=\big((1,x^2-\al),(-1,x^2-\al)\big),$ for $\al=1$.  In general, the points $(-\sqrt{\al},0)$ and $(\sqrt{\al},0)$ are the invisible and visible two-fold singularities, respectively. The bold line represents the  simple two-fold cycle $\CS$, which encloses a period annulus  $\mathcal{A}$ of crossing periodic orbits.}
\label{piecewise}
\end{figure}

In our setting, the construction of a suitable displacement function and its related Melnikov function are the central mechanisms behind our study. As it is fairly known in Melnikov theory, the existence of periodic solutions bifurcating from a period annulus is associated with simple zeros of a certain bifurcation function, called {\it Melnikov function}. Such a function is obtained through the analysis of the perturbed  system using its regular dependence with respect to the perturbation parameter. 
Indeed, in the smooth case, the displacement function (equivalently, the Poincar\'{e} Map) is smooth in the parameter of perturbation. Consequently, the Melnikov function is obtained by expanding the displacement function in Taylor series.  The same procedure has  ben used in some non-smooth systems to study crossing periodic solutions (see, for instance, \cite{BastosEtAl19,BattelliFeckan08,CruzEtAl19,GouveiaEtAl16,GHS,LlibreEtAl15} and the references therein). However, such an approach fails when facing sliding dynamics, which appears, for instance, in the unfolding of two-fold singularities.
Thus, the main novelty of this study consists in the analysis of crossing and sliding periodic solutions bifurcating from a simple two-fold cycle $\CS$ which, as noticed above, is the boundary of a period annulus $\CA,$ in the reversible context. 
The developed procedure for the detection of sliding periodic solutions is rather different, because
  regular perturbations of a Filippov system produce singular perturbation problems in the sliding dynamics. Accordingly,  tools from singular perturbation theory must be employed. We shall see that, although unexpected, the same Melnikov function, obtained by the former classical approach for detecting crossing periodic solutions bifurcating from $\CA,$ also plays an important role in the study of the sliding periodic solutions.

We emphasize that the above mentioned theoretical aspects has been the main motivation behind our study. To the best of our knowledge, non-smooth models of real phenomena exhibiting two-fold cycles are not known so far. Nevertheless, such as our initial example \eqref{example0}, this kind of cycle can be easily found in piecewise mechanical systems. 

This paper is organized as follows. 
First, in Section \ref{prel}, we present the basic notions and results needed to state our main Theorems. 
More specifically, in Section \ref{pds}, we recall the basic definitions about Filippov systems, 
and in Section \ref{bcr} we give some basic concepts and results concerning the reversible unperturbed problem. 
In Section \ref{MR}, we state our main results, Theorems \ref{ta} and \ref{tb}. 
They deal with periodic non-autonomous perturbations of $R$--reversible piecewise smooth differential systems admitting a simple two-fold cycle.
More specifically, we provide a Melnikov function which determines the existence of crossing and sliding periodic solutions for such systems. 
In Theorem \ref{ta}, it is shown that this function determines the existence of  crossing periodic solutions bifurcating from orbits of the period annulus $\mathcal{A}$. In Corollary \ref{ca}, we also consider autonomous perturbations. 
In Theorem \ref{tb}, it is shown that the same Melnikov function also determines, with additional hypotheses, 
the existence of both sliding and crossing periodic solutions bifurcating from the  simple two-fold cycle $\CS$. 
In Section \ref{exam}, we apply our results to study periodic non-autonomous perturbations of the piecewise Hamiltonian differential system \eqref{example0}. 
Finally, Section \ref{proof} is devoted to prove our main results. Some conclusion remarks and further directions are provided in Section \ref{sec:conc}.

\section{Basic concepts and preliminary results}\label{prel}

In this section, we recall the basic concepts and definitions from the theory of non-smooth dynamical systems as well as some preliminary 
results needed to state our main theorems.

\subsection{Filippov systems}\label{pds} The content of this Section is standard and can be found in several other works (see for instance \cite{Fi}). 

Let $U$ be an open bounded subset of $\R^2$. 
We denote by $\CC^r(\ov U,\R^2)$ the set of all $\CC^r$ vector fields $X:\ov U\rightarrow \R^n$. 
Given $h:\ov U\rightarrow\R$ a differentiable function having $0$ as a regular value, we denote by $\Omega_h^r(\ov U,\R^2)$ the space of piecewise smooth differential systems
$Z$ in $\R^2$ such that
\begin{equation}\label{omega}
Z(x,y)=\left\{\begin{array}{l}
X^+(x,y),\quad\textrm{if}\quad h(x,y)>0,\vspace{0.1cm}\\
X^-(x,y),\quad\textrm{if}\quad h(x,y)<0,
\end{array}\right.
\end{equation}
with $X^+,X^- \in \CC^r(\ov U,\R^2)$. As usual, system \eqref{omega} is denoted by $Z=(X^+,X^-)$ and the switching surface $h^{-1}(0)$ by $\Sigma$.

The points on $\Sigma$ where both vectors fields $X^+$ and $X^-$ simultaneously point outward or inward from $\Sigma$ define, respectively, 
the {\it escaping} $\Sigma^e$ or {\it sliding} $\Sigma^s$ regions, and the interior of its complement in $\Sigma$ defines the {\it crossing region} $\Sigma^c$. 
The complementary of the union of those regions  are the {\it tangency} points between $X^+$ or $X^-$ with $\Sigma$.

The points in $\Sigma^c$ satisfy $X^+ h(p)\cdot X^-h(p) > 0$, where $Xh$ denotes the derivative of the function $h$ in the direction of the vector $X$, 
that is $Xh(p)=\langle \nabla h(p), X(p)\rangle$. 
The points in $\Sigma^s$ (resp. $\Sigma^e$) satisfy $X^+h(p)<0$ and $X^-h(p) > 0$ (resp. $X^+h(p)>0$ and $X^-h(p) < 0$). 
Finally, the tangency points of $X^+$ (resp. $X^-$) satisfy $X^+h(p)=0$ (resp. $X^-h(p)=0$). 
For points $p\in\Sigma^s\cup\Sigma^e$, we define the {\it sliding vector field}
\begin{equation*}\label{slisys}
\widetilde Z(p)=\dfrac{X^- h(p) X^+(p)-X^+ h(p) X^-(p)}{X^- h(p)- X^+h(p)}.
\end{equation*}

A tangency point $p\in\Sigma$ is called a {\it visible fold} of 
$X^+$ $($resp. $X^-)$ if $(X^+)^2h(p)>0$ $($resp. $(X^-)^2h(p)<0)$.  Analogously, reversing the inequalities, we define an {\it invisible fold}. 

\subsection{Preliminary results}\label{bcr}

Consider the involution $R(x,y)=(x,-y)$  and denote by $\fix(R)=\{(x,0), x\in\R \}$ its set of fixed points. 
For a $\CC^2$ function $F:D\rightarrow\R^2$, defined on an open bounded subset $D$ of $\R^2$, 
we consider the following $R$-reversible discontinuous piecewise smooth differential system with two zones separated by the straight line $\Sigma=\fix(R)$,
\begin{equation}\label{ups1}
(x',y')^T=Z_0(x,y)=\left\{\begin{array}{l} F^+(x,y) \quad \textrm{if} \quad y>0,\vspace{0.2cm}\\
F^-(x,y) \quad \textrm{if}\quad y<0,
\end{array}\right.
\end{equation}
where 
\begin{equation}\label{eq:F+-}
F^-(x,y)=F(x,y), \quad F^+(x,y)=-RF(R(x,y)). 
\end{equation}

For $z=(x,y)^T$, we denote by $\G^{\pm}(t,z)=\big(\G^{\pm}_1(t,z),\G^{\pm}_2(t,z)\big)^T$ the solutions of systems 
$(x',y')^T=F^{\pm}(x,y)$ such that $\G^{\pm}(0,z)=z$.  Let 
\begin{equation}\label{Ycolum}
Y^{\pm}(t,z)=
D_z \G^{\pm}(t,z)=\left(\dfrac{\p \G^{\pm}}{\p x}(t,z)\quad \dfrac{\p \G^{\pm}}{\p y}(t,z)\right)
\end{equation}
be a  Fundamental Matrix Solution of the variational equations
\begin{equation}\label{vareq}
\dfrac{\p Y^{\pm}}{\p t}(t,z)=D F^{\pm}\left(\G^{\pm}(t,z)\right)Y^{\pm}(t,z),
\end{equation}
with initial condition $Y^{\pm}(0,z)=I_2$ ($2\times 2$ identity matrix). 

The following result is a straightforward consequence of the reversibility property  of the solution, $\G^+(t,z)
=R\G^-(-t,Rz)$.

\begin{lemma}\label{l1}
The equality $Y^-(t,z)=R Y^+(-t,Rz)R$ holds.
\end{lemma}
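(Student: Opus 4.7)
The lemma should follow directly by differentiating the reversibility identity $\Gamma^+(t,z) = R\,\Gamma^-(-t,Rz)$ with respect to the initial condition $z$, so my plan is essentially a single chain-rule calculation combined with the involution property $R^2 = I_2$.

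First, I would take $D_z$ of both sides of $\Gamma^+(t,z) = R\,\Gamma^-(-t,Rz)$. Since $R$ is linear, $D_z(Rz) = R$, and applying the chain rule to the right-hand side yields
\[
Y^+(t,z) \;=\; R \cdot (D_z\Gamma^-)(-t,Rz)\cdot R \;=\; R\,Y^-(-t,Rz)\,R.
\]
Then I would substitute $t \mapsto -t$ and $z \mapsto Rz$ in this identity, obtaining $Y^+(-t,Rz) = R\,Y^-(t, R(Rz))\,R = R\,Y^-(t,z)\,R$. Multiplying both sides on the left and right by $R$ and using $R^2 = I_2$ produces $R\,Y^+(-t,Rz)\,R = Y^-(t,z)$, which is the stated equality.

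As a backup argument in case the chain-rule manipulation is deemed too terse, I could instead verify that both $Y^-(t,z)$ and $R\,Y^+(-t,Rz)\,R$ satisfy the same linear matrix ODE with the same initial condition. Differentiating $R\,Y^+(-t,Rz)\,R$ in $t$ and using \eqref{vareq} for $Y^+$, together with the relation $D F^+(R\zeta) = -R\, DF^-(\zeta)\, R$ obtained by differentiating \eqref{eq:F+-}, would show that it solves \eqref{vareq} for the minus system; at $t = 0$, $Y^+(0,Rz) = I_2$ gives $R\,I_2\,R = I_2$, matching the initial condition for $Y^-$. Uniqueness of solutions to the variational equation then gives the equality.

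There is really no serious obstacle here: the only thing to be careful about is keeping track of where the minus signs and the two copies of $R$ come from (one from differentiating the argument $Rz$, one from the outer $R$), and remembering to use $R^2 = I_2$ at the end to flip the identity into the precise form stated in the lemma.
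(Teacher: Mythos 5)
Your proof is correct and takes exactly the route the paper intends: the paper itself gives no explicit argument, merely noting that the lemma is a ``straightforward consequence of the reversibility property'' $\Gamma^+(t,z)=R\,\Gamma^-(-t,Rz)$, and your chain-rule differentiation of that identity, followed by the substitution $t\mapsto -t$, $z\mapsto Rz$ and $R^2=I_2$, is precisely the computation being alluded to. The alternative argument via uniqueness for the variational equation is also sound, but the direct differentiation is the cleaner and evidently intended proof.
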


As a consequence of the above lemma, we get  
\[
D_z\G^-_1(t,z)=D_z\G_1^+(-t,Rz)R \,\,\text{and}\,\, D_z\G^-_2(t,z)=-D_z\G_2^+(-t,Rz)R.
\]

Let $F=(F_1,F_2)^T$. 
In order to assure that system \eqref{ups1} has a simple two-fold cycle (see Figure \ref{fig1}), we have to assume the following hypotheses:
\begin{itemize}
\item[$(h_1)$] 
There exist  $x_i<x_v$ such that
\begin{equation*}\label{visinv}
F_2(p_{v})=F_2(p_{i})=0, \quad \dfrac{\p F_2}{\p x }(p_v)F_1(p_v)< 0, \quad \text{and}\quad\dfrac{\p F_2}{\p x }(p_i)F_1(p_i)> 0.
\end{equation*}
where  $p_{v}=(x_{v},0) \in\Sigma$, $p_{i}=(x_{i},0) \in\Sigma$, and $F_2(x,0)\neq0$ for $x_i<x<x_v$. 

\item[$(h_2)$] 
For each $x_i<x\leq x_v$, the solution $\G^-(t,x,0)$ reaches transversely the line of discontinuity $\Sigma$ for $t=\ov\si(x)>0$, 
that is 
\begin{equation}\label{eq:sigmax}
\G^-_2(\ov\si(x),x,0)=0 \,\,\text{and}\,\,   F_2\big(\G^-(\ov\si(x),x,0)\big)\neq0.
\end{equation}

\end{itemize}

\begin{figure}[h]
\begin{center}
\begin{overpic}[width=8.5cm]{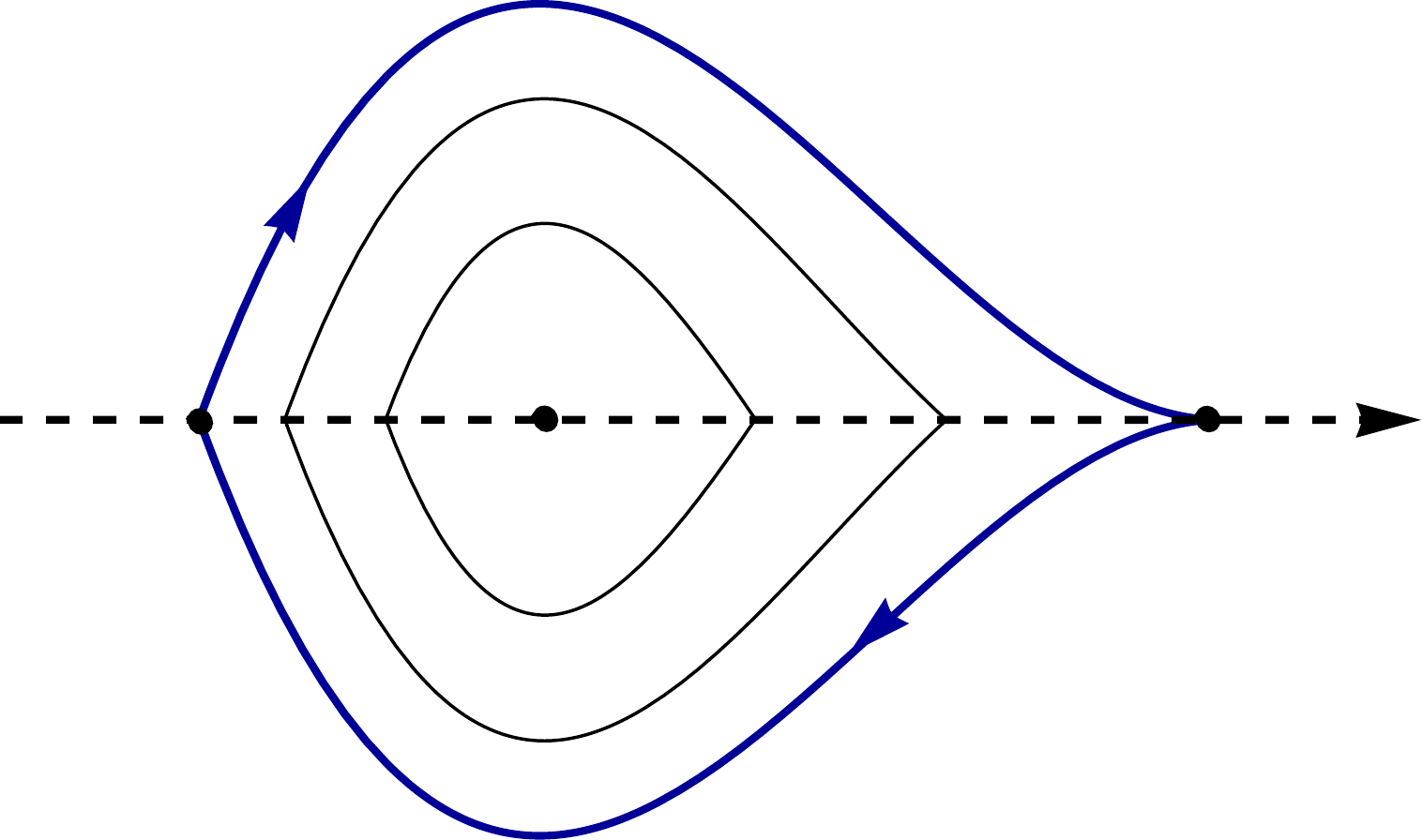}
\put(2,31){$\Sigma$} 
\put(84,32){$p_v$}
\put(37.5,32){$p_i$}
\put(10,32){$q_v$}
\put(101,28){$x$}
\put(24,55){$\CS$}
\end{overpic}
\end{center}

\caption{Periodic orbits of system \eqref{ups1} surrounding the invisible two-fold point $p_i$ and fulfilling an annulus enclosed by the 
simple two-fold cycle $\CS$.}\label{fig1}
\end{figure}

From the reversibility property of the vector field $Z_0$, hypothesis $(h_1)$ implies that  the points $p_v, p_i \in\Sigma$  are, respectively, 
visible--visible and invisible--invisible folds. 

Hypothesis $(h_2)$ fixes the orientation of the flow, which implies that
$$
F_1(p_{v,i})<0, \,\, \frac{\p F_2}{\p x}(p_v)>0,\,\,\text{and}\,\, \frac{\p F_2}{\p x}(p_i) <0.
$$  
Hypothesis $(h_1)$ also leads to the next result, which allows us to make explicit the first column of the matrix $Y^-(t,p_v)$, see \eqref{Ycolum}.
\begin{lemma}\label{l2}
For every $t\in\R,$ the following equality holds
\[
\dfrac{\p \Gamma^{-}}{\p x}(t,p_v)=\dfrac{F\big(\Gamma^{-}(t,p_v)\big)}{F_1(p_v)}.
\]
\end{lemma}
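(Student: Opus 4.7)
The plan is to recognize that both sides of the claimed equality satisfy the same linear initial value problem, so uniqueness of solutions forces them to coincide.

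First I would set $u(t):=\dfrac{\p \Gamma^-}{\p x}(t,p_v)$, which is the first column of the fundamental matrix $Y^-(t,p_v)$ defined in \eqref{Ycolum}. By the variational equation \eqref{vareq}, $u(t)$ solves
\begin{equation*}
u'(t)=DF\big(\Gamma^-(t,p_v)\big)\,u(t),\qquad u(0)=(1,0)^T,
\end{equation*}
since $Y^-(0,p_v)=I_2$ and $F^-=F$.

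Next I would define the candidate
\begin{equation*}
v(t):=\dfrac{F\big(\Gamma^-(t,p_v)\big)}{F_1(p_v)},
\end{equation*}
which is well defined because hypothesis $(h_2)$ (via the orientation remark following it) gives $F_1(p_v)<0$, in particular $F_1(p_v)\neq 0$. Differentiating and using that $t\mapsto\Gamma^-(t,p_v)$ is an integral curve of $F$, the chain rule yields
\begin{equation*}
v'(t)=\dfrac{1}{F_1(p_v)}\,DF\big(\Gamma^-(t,p_v)\big)\,F\big(\Gamma^-(t,p_v)\big)=DF\big(\Gamma^-(t,p_v)\big)\,v(t),
\end{equation*}
so $v$ satisfies the same linear ODE as $u$.

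It then remains to check that the initial conditions agree. At $t=0$,
\begin{equation*}
v(0)=\dfrac{F(p_v)}{F_1(p_v)}=\dfrac{\big(F_1(p_v),F_2(p_v)\big)^T}{F_1(p_v)}=(1,0)^T,
\end{equation*}
where I used $F_2(p_v)=0$ from hypothesis $(h_1)$. Hence $v(0)=u(0)$, and by uniqueness of solutions to linear ODEs we conclude $u(t)=v(t)$ for all $t\in\R$, which is exactly the identity claimed in the lemma. There is no real obstacle here; the whole content of the proof is the observation that hypothesis $(h_1)$ makes $F(p_v)$ proportional to $e_1$, so that the tangent vector along the trajectory is, after normalization by $F_1(p_v)$, a genuine solution of the variational equation with the correct initial datum.
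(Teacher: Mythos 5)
Your proof is correct and follows essentially the same approach as the paper: both show that $\partial_x\Gamma^-(t,p_v)$ and $F(\Gamma^-(t,p_v))/F_1(p_v)$ satisfy the same linear variational equation with the same initial condition $(1,0)^T$ (using $F_2(p_v)=0$ from $(h_1)$), and then invoke uniqueness. The only minor addition you make is explicitly noting $F_1(p_v)\neq 0$, which the paper leaves implicit.
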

\begin{proof}
First we note that, as $F^-=F$,  the function $w(t)=\dfrac{\p \Gamma^{-}}{\p x}(t,p_v)$ is a solution of the differential equation 
$\dot w=D_zF\big(\G^{-}(t,p_v)\big)w$ with the initial condition $w(0)=(1,0)$. 
Now, take 
\[
\ov w(t)=\dfrac{\dfrac{\p \Gamma^{-}}{\p t}(t,p_v)}{F_1(p_v)}=\dfrac{F\big(\Gamma^{-}(t,p_v)\big)}{F_1(p_v)}.
\]
Computing its derivative with respect to the variable $t$ we have
\[
\begin{array}{RL}
\dfrac{d\ov w}{dt}(t)=D_zF\big(\Gamma^{-}(t,p_v)\big)\dfrac{\dfrac{\p \Gamma^{-}}{\p t}(t,p_v)}{F_1(p_v)}=D_zF\big(\Gamma^{-}(t,p_v)\big)\ov w(t).
\end{array}
\]
Moreover, hypothesis $(h_1)$ implies that
\[
\ov w(0)=\dfrac{F(p_v)}{F_1(p_v)}=\left(\dfrac{F_1(p_v)}{F_1(p_v)},\dfrac{F_2(p_v)}{F_1(p_v)}\right)=(1,0)=w(0).
\]
Hence, we conclude that $w(t)=\ov w(t)$.
\end{proof}

Hypothesis $(h_2),$ together with the reversibility property, imply that for each $x_i<x\leq x_v$ the function
\begin{equation}\label{gamma}
\g(t,x)=\big(\g_1(t,x),\g_2(t,x)\big)=\left\{
\begin{array}{ll}
\G^-(t,x,0)&\textrm{if}\quad 0\leq t \leq \ov\si(x),\vspace{0.2cm}\\
R\G^-(-t,x,0)&\textrm{if}\quad -\ov\si(x)\leq t \leq 0
\end{array}\right.
\end{equation}
is a $2\ov\si(x)$--periodic solution of system \eqref{ups1} such that $\g(0,x)=(x,0)\in\Sigma$.
Consequently, the invisible two fold $p_i$ behaves as a center having an annulus of periodic orbits ending at the simple two-fold cycle 
$\CS=\{\g(t,x_v):\,-\ov\si(x_v)\leq t\leq \ov\si(x_v)\}$ (see Figure \ref{fig1}). 
Notice that
\begin{equation}\label{eq:qv}
\CS\cap\Sigma=\{p_v,q_v\},\quad \mbox{where}\quad q_v=\Gamma^-(\ov\si(x_v),p_v).
\end{equation}
From now on, when it is convenient, we shall denote $\G^-$ and $Y^-$, only by $\G$ and $Y$, respectively.  

We note that, by hypothesis $(h_2)$, the function $\ov\si(x)$ is differentiable on the interval  
$(x_i,x_v].$ Indeed, it is a solution of the implicit equation $\G_2(\ov{\si}(x),x,0)=0$. 
Differentiating this last equality implicitly in the variable $x$ we obtain,
for each $x_i<x\leq x_v$, the following relation
\begin{equation}\label{eq:sigmaprima}
\ov\si'(x)=-\dfrac{\dfrac{\p \G_2}{\p x}(\ov\si(x),x,0)}{F_2(\G(\ov\si(x),x,0))}.
\end{equation}
Furthermore, since $p_i=(x_i,0)$ is an invisible-invisible fold, then $\inf\{\ov\si(x):\,x_i<x\leq x_v\}=0,$ and 
$0\leq \si_M=\sup\{\ov\si(x):\,x_i<x\leq x_v\}<\infty$. 
Accordingly, we fix the interval $\CT=[0,\si_M]$.

\section{Statement of the main results}\label{MR}

We consider the following perturbation of system \eqref{ups1}. 
\begin{equation}\label{ps1}
(x',y')^T=Z_\e(t,x,y)=\left\{\begin{array}{l} X^+_{\e}(t,x,y) \quad \textrm{if} \quad y>0,\vspace{0.2cm}\\
X^-_{\e}(t,x,y) \quad \textrm{if}\quad y<0,
\end{array}\right.
\end{equation}
where
\[
X^{\pm}_{\e}(t,x,y)=F^{\pm}(x,y)+\e \,G^{\pm}(t,x,y)+\e^2 H^{\pm}(t,x,y;\e).
\]
We assume that $G^{\pm}(t,x,y)$ and $H^{\pm}(t,x,y;\e)$ are smooth functions in $\R\times D$ and $\R\times D\times (-\e_0,\e_0)$, respectively,  
and $2\si$--periodic in the variable $t$  for some $\si\in\CT=[0,\si_M]$.

We want to detect, for $\e>0$ small enough, the existence of isolated $2\sigma$-periodic solutions of system \eqref{ps1}.  First, we notice that if 
\begin{equation}\label{revcond}
X^-_{\e}(t,z)+R X^+_{\e}(s,R z)\equiv 0, \text{ for } (t,s,z)\in \R^2\times D,
\end{equation}
then,  for $|\e|\neq0$ sufficiently small, every periodic solution of $Z_0(x,y)$ persists for $Z_{\e}(t,x,y).$  Indeed, \eqref{revcond} implies that $Z_{\e}(t,x,y)$ is autonomous and $R$-reversible. Taking \eqref{revcond} into account, we define the following operator:
\begin{equation}\label{rever}
\big\{X^+,X^-\big\}_{\T}(t,z)=X^-(t+\T,z)+RX^+(-t+\T,Rz).
\end{equation}
Notice that $\big\{X^+_{\e},X^-_{\e}\big\}_{\T}$ can be seen as a measurement of the non-reversibility of $Z_{\e}.$ Indeed, $\big\{X^+_{\e},X^-_{\e}\big\}_{\T}\equiv0$ is equivalent to condition \eqref{revcond}. Thus, $\big\{X^+_{\e},X^-_{\e}\big\}_{\T}\not\equiv0$ is a necessary condition for the existence of isolated periodic solutions of $Z_{
\e}$. Computing the expansion of $\big\{X^+_{\e},X^-_{\e}\big\}_{\T}$ around $\e=0,$ we see that $\big\{G^-,G^+\big\}_{\T}\neq0$ implies $\big\{X^+_{\e},X^-_{\e}\big\}_{\T}\neq 0,$ for $|\e|\neq0$ sufficiently small. The value $\big\{G^-,G^+\big\}_{\T}$ will be important for the definition of the Melnikov function.

Accordingly, let $\s^1_{\si}\equiv \R/(2\si\Z)$ and define the Melnikov function $M:\s^1_{\si}\times(x_i,x_v]\rightarrow \R$ as
\begin{equation}\label{mel}
M(\T,x)=F(\g(\ov{\si}(x),x))\wedge \left(Y(\ov{\si}(x),x,0)\!\!\!\int_0^{\bar{\si}(x)}\!\!\!\!\!\!\!\! Y(t,x,0)^{-1}\big\{G^-,G^+\big\}_{\T}(t,\g(t,x))dt\right).
\end{equation}
where $\gamma$ is given in \eqref{gamma}, $Y$ is the fundamental matrix given in \eqref{Ycolum}.
Here, the wedge product is defined by $(a_1,a_2)\wedge(b_1,b_2)=\langle (-a_2,a_1),(b_1,b_2)\rangle$. As mentioned before, the expression \eqref{mel} will be obtained through standard analysis of the expansion of a suitable displacement function around $\e=0.$ A similar Melnikov function was obtained in \cite{GouveiaEtAl16} for autonomous perturbations of a $n$-dimensional non-smooth system with a codimension-1 period annulus.

\subsection{Bifurcations from the period annulus $\CA$}
Our first main result is concerned about the existence of isolated crossing periodic solutions of system \eqref{ps1} bifurcating from the period annulus $\CA.$

This kind of problem has been studied in a rather general setting for smooth systems (see, for instance, \cite{CanLliNov17,LliNovTei14} and the references therein). When dealing with non-smooth systems, the geometry of the discontinuity manifold has a strong influence on the bifurcation functions controlling the existence of isolated crossing periodic solutions. Due to this fact, the existing results in the research literature usually assume some constraints either on the unperturbed system, on the discontinuity manifold, or on the perturbation. For vanishing unperturbed systems, the averaging theory \cite{Sanders07} provides the first order bifurcation function (see \cite{LliMerNov15,LliNovTei15a}). The bifurcation functions at any order were obtained in \cite{LliNovRod17} when the discontinuity appears only in the time variable, and up to order $2$ in \cite{BastosEtAl19} for more general discontinuity manifold. For non-vanishing unperturbed systems with a period annulus of crossing periodic orbits, the bifurcation functions at any order were obtained in \cite{LliNovRod2020} assuming again that the discontinuity appears only in the time variable. In \cite{GHS},  the Melnikov function was obtained for non-autonomous perturbation of a class of planar piecewise Hamiltonian systems, and in \cite{Han15} for autonomous perturbations of general planar piecewise Hamiltonian systems. In \cite{GouveiaEtAl16}, the Melnikov function was obtained for autonomous perturbation of non-smooth period annulus in $\R^n.$ In \cite{BattelliFeckan08}, a Melnikov function was obtained for studying the persistence of homoclinic trajectories in non-smooth systems. None of the mentioned results can be directly applied in our case.

\begin{mtheorem}\label{ta}
Take $\si\in\CT=[0,\si_M]$ and $x_\si\in (x_i,x_v)$ such that $\ov\si(x_\si)=\si$  and $\ov\si'(x_\si)\neq0$, where $\ov \si(x)$ is given in \eqref{eq:sigmax}.
Assume that the vector field $Z_\e$ in \eqref{ps1} is $2\si$--periodic in the variable $t$.
If there exists $\T^*\in\s^1_{\si}$ such that
$$
M(\T^*,x_\si)=0 \quad \mbox{and} \quad \frac{\p M}{\p\T} (\T^*,x_\si)\neq 0,
$$ 
then for $\e>0$ sufficiently small there exists an isolated crossing $2\si$--periodic solution of system \eqref{ps1} with initial condition, 
in $\s^1_{\si}\times D$,  $\e$-close to $(t_0,z_0)=(\T^*,(x_{\si},0))$.
\end{mtheorem}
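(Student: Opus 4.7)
The plan is to construct a displacement function on $\s^1_\si\times (x_i,x_v)$ whose isolated zeros correspond to crossing $2\si$--periodic solutions of \eqref{ps1}, expand it in $\e$ up to first order, match the leading correction against the Melnikov function $M$, and close with two applications of the Implicit Function Theorem. For $|\e|$ small, the interior unperturbed orbit $\g(\cdot,x_\si)$ meets $\Sigma$ transversally at both endpoints (since $x_i<x_\si<x_v$, so $F_2(\g(\ov\si(x_\si),x_\si))\ne 0$). Hence the IFT yields smooth times $\tau_-(\T,x,\e)$ and $\tau_+(\T,x,\e)$ such that the forward orbit of $(x,0)$ under $Z_\e$ starting at time $\T$ first meets $\Sigma$ from below at time $\T+\tau_-$ and then, after entering $y>0$, meets $\Sigma$ from above at time $\T+\tau_-+\tau_+$, with $\tau_\pm(\T,x,0)=\ov\si(x)$ by the $R$-reversibility of $Z_0$. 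Writing the final hitting point as $(\xi(\T,x,\e),0)$, the map
\begin{equation*}
\D(\T,x,\e)=\Big(\tau_-(\T,x,\e)+\tau_+(\T,x,\e)-2\si,\ \xi(\T,x,\e)-x\Big)
\end{equation*}
has simple zeros corresponding to isolated crossing $2\si$--periodic solutions of \eqref{ps1}.

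The unperturbed value is $\D(\T,x,0)=(2\ov\si(x)-2\si,0)$, so $\{x=x_\si\}$ is a zero set independent of $\T$. Since $\ov\si'(x_\si)\ne 0$, a first application of the IFT solves $\D_1=0$ for a smooth $x=X(\T,\e)$ with $X(\T,0)=x_\si$, and the reduced bifurcation function $\f(\T,\e)=\D_2(\T,X(\T,\e),\e)$ vanishes identically at $\e=0$. Factoring $\f(\T,\e)=\e\,\widetilde\f(\T,\e)$ with $\widetilde\f(\T,0)=\p_\e \D_2(\T,x_\si,0)$, the reduced problem becomes finding simple zeros of the smooth extension $\widetilde\f$.

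The main technical step is to identify $\p_\e \D_2(\T,x_\si,0)$ with the Melnikov function \eqref{mel}, up to a non-zero multiplicative constant. To this end I expand the perturbed flows $\G_\e^\pm$ in $\e$ via variation of parameters driven by the fundamental matrices $Y^\pm$ of \eqref{vareq}, compute first-order variations of $\tau_\pm$ by implicit differentiation of the hitting conditions $\G^\pm_{\e,2}(\T+\tau_\pm)=0$, and assemble the second component of $\xi$. Two structural facts collapse the expression into \eqref{mel}: Lemma \ref{l1} rewrites the $+$-branch contribution in terms of the $-$-branch variational system after conjugation by $R$, producing precisely the combination $\big\{G^-,G^+\big\}_\T$ from \eqref{rever} inside a single integral over $[0,\ov\si(x_\si)]$; and the wedge against $F(\g(\ov\si(x_\si),x_\si))$ kills boundary contributions from $\p_\e\tau_\pm$ (which are parallel to the unperturbed vector field), while formula \eqref{eq:sigmaprima} together with Lemma \ref{l2} absorb the terms due to the moving endpoint on $\Sigma$.

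Finally, the hypothesis $M(\T^*,x_\si)=0,\ \p_\T M(\T^*,x_\si)\ne 0$ translates to $\widetilde\f(\T^*,0)=0$ and $\p_\T\widetilde\f(\T^*,0)\ne 0$, so a second application of the IFT produces a smooth branch $\T(\e)$ with $\T(0)=\T^*$ solving $\widetilde\f=0$; the orbit of $Z_\e$ with initial condition $\big(X(\T(\e),\e),0\big)$ at time $\T(\e)$ is the claimed isolated crossing $2\si$--periodic solution. I expect the principal obstacle to be the bookkeeping of the third step: tracking all contributions to $\p_\e\D_2$, showing that the endpoint and hitting-time corrections cancel through the wedge projection, and arriving at exactly the symmetric expression \eqref{mel} rather than an asymmetric combination of two separate integrals over $[0,\ov\si(x_\si)]$.
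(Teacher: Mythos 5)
Your argument is correct in spirit and shares the paper's skeleton (displacement function $\rightarrow$ solve the time-matching condition for $x$ via the IFT using $\ov\si'(x_\si)\neq0$ $\rightarrow$ factor out $\e$ and identify the leading coefficient with $M$ $\rightarrow$ a second IFT in $\T$), but the displacement function you build is genuinely different from the one the paper uses, and that difference has consequences for the bookkeeping you flag as the main obstacle. You construct a full forward return map: flow under $X^-_\e$ from $(x,0)$ at time $\T$ to the first return on $\Sigma$, then continue under $X^+_\e$ to the second return. The paper instead shoots bidirectionally: forward from $(\T,(x,0))$ under $X^-_\e$ and \emph{backward} from $(\T+2\si,(x,0))$ under $X^+_\e$, and takes the mismatch of the two hitting points on $\Sigma$. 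The payoff of bidirectional shooting is that both legs start at the \emph{unperturbed} point $(x,0)$, so in Lemma \ref{fl} one always has $z_1=0$ and the two contributions $\psi^\mp$ assemble directly into a single integral against $\{G^-,G^+\}_\T$ after one application of Lemma \ref{l1}. In your composition of flows, the second leg starts at the \emph{perturbed} midpoint $A=q+\e\delta_A+\CO(\e^2)$ at the perturbed time $\T+\tau_-$, so you must additionally propagate $\delta_A$ by $D_z\G^+(\ov\si(x),q)$, track the $\e$-shift of the time argument in $G^+$, and then show that all of these extra terms cancel. They do, and the time argument $\T+\tau_-+u \pmod{2\si}$ does reduce to $\T-s$ with $s=\ov\si(x)-u$, recovering $\{G^-,G^+\}_\T$, but this is the kind of extra accounting the paper's symmetric construction avoids.

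One specific misattribution: you invoke Lemma \ref{l2} and formula \eqref{eq:sigmaprima} to ``absorb the terms due to the moving endpoint on $\Sigma$.'' Lemma \ref{l2} is the statement that $\tfrac{\p\G^-}{\p x}(t,p_v)$ is parallel to the vector field, and it holds \emph{only} at the fold $p_v$ because $F(p_v)$ is horizontal there; for a generic $x\in(x_i,x_v)$ the analogue fails, and indeed the paper does not use Lemma \ref{l2} in the proof of Theorem \ref{ta} at all (it is reserved for Theorem \ref{tb}, where one works at $p_v$). The cancellation you need in the return-map version is instead the general identity $D_z\G^+(t,q)\,F(q)=F(\G^+(t,q))$ (the flow pushes the vector field to itself), applied to the component of $\delta_A$ along $F(q)$; the component of $\delta_A$ along $\Sigma$ contributes nothing at first order since the unperturbed return map is the identity. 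Likewise, \eqref{eq:sigmaprima} enters the paper only in the first IFT step (computing $\p\CF_1/\p x=2\ov\si'$), not in the Melnikov identification. So your overall plan is sound and leads to the same $M(\T,x)$, but be careful: the endpoint cancellation in your version rests on the flow-commutation identity, not on Lemma \ref{l2}.
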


The next result is obtained as a consequence of Theorem \ref{ta} and deals with the continuation problem of subharmonic 
crossing periodic solutions of system \eqref{ps1} when it is autonomous.

\begin{corollary}\label{ca}
Assume that the vector field $Z_\e$ in \eqref{ps1} is autonomous and denote 
$M(x)=M(\T,x)$. 
If there exists $x^*\in(x_i,x_v)$ such that $M(x^*)=0$ and $M'(x^*)\neq0$ then, for $\e>0$ sufficiently small, 
there exists a crossing periodic solution of system \eqref{ps1} with initial condition, in $D,$ $\e$-close to $(x^*,0)$.
\end{corollary}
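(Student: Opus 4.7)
The plan is to adapt the displacement-function construction from the proof of Theorem \ref{ta} to the autonomous case, with $x$ (rather than $\T$) playing the role of the bifurcation parameter. The key initial observation is that when $Z_\e$ is autonomous, $G^\pm$ and $H^\pm$ are $t$-independent, so the bracket \eqref{rever} reduces to $\{G^-,G^+\}_\T(t,z)=G^-(z)+RG^+(Rz)$, independent of both $t$ and $\T$. Consequently $M(\T,x)=M(x)$ and $\p_\T M\equiv 0$, so Theorem \ref{ta} cannot be invoked directly; instead, the $x$-dependence of $M$ must drive the bifurcation.

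I would then introduce a first-return Poincar\'e map $P(x,\e)$ on $\Sigma$ with variable return time $\tau(x,\e)$---in contrast to Theorem \ref{ta}, where the return time is fixed at $2\si$. By hypothesis $(h_2)$ and transversality, $P$ is smooth in $(x,\e)$ for $x\in (x_i,x_v)$ and $\e$ small, and because every unperturbed orbit in $\CA$ is closed, $P(x,0)\equiv x$, so the displacement $\Delta(x,\e)=P(x,\e)-x$ vanishes identically at $\e=0$. Expanding $\Delta(x,\e)=\e\,\tilde M(x)+O(\e^2)$, the goal is to show that $\tilde M(x)=c(x)M(x)$ for some nonvanishing factor $c(x)$ on $(x_i,x_v)$. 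This runs along the same lines as the corresponding step in the proof of Theorem \ref{ta}: the first-order variation is obtained by integrating the variational equation \eqref{vareq} along the closed unperturbed orbit $\g(\cdot,x)$; the contributions from the two half-plane arcs are combined using the reversibility identities of Lemmas \ref{l1} and \ref{l2}; and the resulting expression collapses into the bracket-form integral of \eqref{mel}, with the outer wedge-with-$F$ prefactor arising from the projection onto $\Sigma$.

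Once $\tilde M=cM$ with $c(x^*)\neq 0$ is in hand, the hypotheses $M(x^*)=0$ and $M'(x^*)\neq 0$ give $\Delta(x^*,0)=0$; since $\Delta(\cdot,0)\equiv 0$, a standard division-by-$\e$ argument produces a smooth function $\widehat\Delta(x,\e)=\Delta(x,\e)/\e$, extended to $\e=0$ by $c(x)M(x)$, with $\widehat\Delta(x^*,0)=0$ and $\p_x\widehat\Delta(x^*,0)=c(x^*)M'(x^*)\neq 0$. The implicit function theorem then yields a smooth branch $x(\e)$ with $x(0)=x^*$ and $\Delta(x(\e),\e)=0$ for $\e>0$ small; the orbit of $Z_\e$ through $(x(\e),0)$ is the desired crossing periodic solution. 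The main obstacle will be justifying the identification $\tilde M(x)=c(x)M(x)$ with an explicit nonvanishing prefactor, since this requires tracking the reversibility structure carefully through the first-order variational analysis and checking that the Melnikov integral \eqref{mel}---originally derived to detect $2\si$-periodic bifurcations in the non-autonomous setting---indeed governs the first-return displacement in the autonomous problem.
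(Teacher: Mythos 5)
The paper gives no separate proof of Corollary~\ref{ca}; it is presented after Theorem~\ref{ta} with the remark that it is ``obtained as a consequence of Theorem~\ref{ta},'' but this is a consequence of the \emph{proof} technique rather than the statement. You correctly spot that the statement of Theorem~\ref{ta} cannot literally be invoked: in the autonomous case $\big\{G^-,G^+\big\}_{\T}$ is $\T$-independent, hence $\partial M/\partial\T\equiv 0$ and the nondegeneracy hypothesis of Theorem~\ref{ta} is never met. Your remedy---use $x$ as the bifurcating variable and drop the time-matching constraint---is exactly what the paper's machinery provides: in the proof of Theorem~\ref{ta} the relation \eqref{melnikov1} shows $\CF_2(\T,x;\e)=-\e\,M(\T,x)/F_2(\g(\ov\si(x),x))+\CO(\e^2)$, and in the autonomous setting $\CF_2$ is $\T$-independent while $F_2(\g(\ov\si(x),x))\neq0$ by $(h_2)$, so the implicit function theorem applied to $\CF_2(x;\e)/\e$ alone, using $M(x^*)=0$ and $M'(x^*)\neq 0$, already produces the closed crossing orbit (one does not need $\CF_1=0$, since the period need not be $2\si$).

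Your formulation via the first-return Poincar\'e map $P(x,\e)$ and its displacement $\Delta(x,\e)=P(x,\e)-x$ is equivalent to using $\CF_2$: the two differ only by composition with the backward $X^+$-half-return, whose $x$-derivative at $\e=0$ is nonvanishing, so the identification $\tilde M(x)=c(x)M(x)$ with $c$ nonvanishing goes through exactly as you indicate. One small correction of phrasing: in the proof of Theorem~\ref{ta} the hitting times $t^\pm(\T,x;\e)$ are already variable; it is the auxiliary equation $\CF_1=0$ that pins the total return time to $2\si$, and this is the equation you are (correctly) discarding rather than ``fixing a return time.'' Apart from that, the argument is sound and fills in precisely the gap the paper leaves implicit.
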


\subsection{Bifurcations from the two-fold connection $\CS$}
Our second main result is concerned about the bifurcation of periodic solutions  from the simple two-fold connection $\CS$ in the special case that system \eqref{ps1} is perturbed by 
$2\si_v=2\ov\si(x_v)$--periodic functions. 
 This problem resembles the bifurcation of periodic solutions from saddle homoclinic connections in smooth systems. Indeed, $\CS$ is a boundary of a period annulus $\CA,$ with the difference that a trajectory connects the two-fold singularity to itself in a finite time, namely $2\sigma_v$.
We shall see that, in this case, the unfolding of $\CS$ gives rise to sliding dynamics and either a crossing or a sliding periodic solution can appear. Therefore, the standard analysis performed in Theorem \ref{ta} does not apply here.

For each $\T\in\s^1_{\si_v}$, we define the number $g_{\T}\in\R$  as
\begin{equation}\label{g}
\begin{array}{rcll}
g_\T &=&\Big\langle D_z \Gamma_2(\si_v,p_v)\,,\,\displaystyle  \int_0^{\si_v}
\!\!\!\!\!\! Y(t,p_v)^{-1}\big\{&\!\!\!\!\!G^-,-G^+\big\}_{\T}(t,\G(t,p_v))dt\Big\rangle \vspace{0.2cm} \\
&=&\Big\langle D_z \Gamma_2(\si_v,p_v)\,,\, \displaystyle \int_0^{\si_v}
\!\!\!\!\!\! Y(t,p_v)^{-1}\Big(&\!\!\!\!\!G^-(t+\T,\G(t,p_v))\\
&&&\!\!\!\!\!-RG^+(-t+\T,R\G(t,p_v))\Big)dt\Big\rangle.
\end{array}
\end{equation}

In the above expression, the inner product notation $\langle*,*\rangle$ is actually an abuse of notation. Indeed, the left and right factors are expressed as row and column vectors , respectively.  Thus, the matrix product between them results in a scalar. Nevertheless, due to the amount of computations involving matrices, we decide to consider the inner product notation to emphasize that the result is in fact a scalar, avoiding then any possible misunderstanding. 

\begin{mtheorem}\label{tb}
Suppose that the vector field $Z_\e$ in \eqref{ps1} is $2\si_v$--periodic in the variable $t$
and assume
that there exists $\T^*\in\s^1_{\si_v}$ such that $M(\T^*,x_v)=0$ and $(\p M/\p\T)(\T^*,$ $x_v)\neq0$.
\begin{itemize}
\item[{\bf (a)}]
If $G^+_2(\T^*,p_v)\neq G^-_2(\T^*,p_v)$ and 
\[
g_{\T^*}>\dfrac{2F_2(q_v)}{F_1(p_v)\dfrac{\p F_2}{\p x}(p_v)}\max\big\{G^{\pm}_2(\T^*,p_v)\big\},
\]
then, for $\e>0$ sufficiently small, there exists a sliding $2\si_v$--periodic solution of system \eqref{ps1} with initial condition, 
in $\s^1_{\si_v}\times D$, $\e$-close to $(t_0,z_0)=(\T^*,p_v)$.
Moreover, this solution slides either on $\Sigma^s$ or $\Sigma^e$ provided that $G^+_2(\T^*,p_v)< G^-_2(\T^*,p_v)$  or 
$G^+_2(\T^*,p_v)> G^-_2(\T^*,p_v)$, respectively.

\item[{\bf (b)}] If 
\[
g_{\T^*}<\dfrac{2F_2(q_v)}{F_1(p_v)\dfrac{\p F_2}{\p x}(p_v)}\max\big\{G^{\pm}_2(\T^*,p_v)\big\},
\]
then, for $\e>0$ sufficiently small, there exists a crossing $2\si_v$--periodic solution of system \eqref{ps1} with initial condition, 
in $\s^1_{\si_v}\times D$,  $\e$-close to $(t_0,z_0)=(\T^*,p_v)$.
\end{itemize}
\end{mtheorem}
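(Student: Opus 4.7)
My plan is to analyse the full-period return map of the perturbed Filippov flow $Z_\e$ near the two-fold cycle $\CS$, starting from initial conditions $(t_0,(x_0,0))\in\s^1_{\si_v}\times\Sigma$ with $(t_0,x_0)$ close to $(\T^*,p_v)$. Extending the crossing analysis of Theorem~\ref{ta} to the boundary orbit $x_0=x_v$, I would construct a displacement function $\Delta_\e(t_0,x_0)$ by tracking the $X^-_\e$ half-flow from $(x_0,0)$ back to $\Sigma$ (near $q_v$) followed by the $X^+_\e$ half-flow back to $\Sigma$ (near $p_v$). Using the variational equation \eqref{vareq} and the reversibility-based Lemmas \ref{l1} and \ref{l2}, the $\e$-expansion of $\Delta_\e$ around $(\T^*,x_v)$ is controlled at leading order by $M(\T^*,x_v)$ and its $\T$-derivative. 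The hypotheses $M(\T^*,x_v)=0$ and $\p M/\p\T(\T^*,x_v)\neq 0$, combined with the implicit function theorem, yield a candidate zero $(t_0^\e,x_0^\e)\to(\T^*,x_v)$ as $\e\to 0$.

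\textbf{Geometry of the sliding strip.} The perturbation opens in a neighbourhood of $p_v$ a small portion of $\Sigma$ where the fields $X^\pm_\e$ are not both crossing. Solving $X^\pm_\e h(t,x,0)=0$ by the implicit function theorem gives fold curves $x_T^\pm(t,\e)=x_v-\e G^\pm_2(t,p_v)/(\p F_2/\p x)(p_v)+O(\e^2)$. Between them sits a sliding strip $\Sigma^s_\e$ of width $O(\e)$ when $G^-_2(t,p_v)>G^+_2(t,p_v)$, and an escaping strip $\Sigma^e_\e$ when the inequality reverses; the non-degeneracy hypothesis in (a) guarantees this strip is a true two-sided interval at $t=\T^*$. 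Comparing the leading-order landing point of the candidate orbit from the previous step with the fold curves $x_T^\pm$ produces, via the variational expression for the $y$-displacement at $p_v$, exactly the quantity $g_{\T^*}$; the threshold at which the landing coincides with the boundary of the strip is $g_{\T^*}=(2F_2(q_v)/[F_1(p_v)(\p F_2/\p x)(p_v)])\max\{G^\pm_2(\T^*,p_v)\}$, which matches the statement.

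\textbf{Completion in the two cases.} If the inequality of (b) holds, the candidate landing lies in the crossing region $\Sigma^c_\e$, so $(t_0^\e,x_0^\e)$ corresponds to a genuine crossing $2\si_v$-periodic solution, proving (b). If instead the inequality of (a) holds, the landing lies strictly inside $\Sigma^s_\e$ (resp.\ $\Sigma^e_\e$), and no crossing orbit passes near $(\T^*,p_v)$. In this case I replace the crossing return map by a sliding Poincar\'e map obtained by composing: two crossing half-trajectories (the first from a point on $\partial\Sigma^s_\e$ via $X^-_\e$ to a neighbourhood of $q_v$, and the second from there back into the interior of $\Sigma^s_\e$ via $X^+_\e$) with a sliding segment on $\Sigma^s_\e$ (resp.\ $\Sigma^e_\e$) governed by the Filippov field $\widetilde Z_\e$. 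Closing this composite map is a fixed-point problem that reduces, at leading order, to the same Melnikov equation $M(\T^*,x_v)=0$ together with $g_{\T^*}$ exceeding the threshold. The sign of $G^+_2(\T^*,p_v)-G^-_2(\T^*,p_v)$ determines whether the invariant strip is $\Sigma^s$ or $\Sigma^e$, yielding the final clause of (a).

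\textbf{Main obstacle.} The technical heart of the proof lies in the sliding segment of part (a). Since $\Sigma^s_\e$ has width $O(\e)$ and the Filippov sliding field $\widetilde Z_\e$ has $x$-component of order one there, the slide occupies time $O(\e)$, and a regular $\e$-expansion of the Poincar\'e map fails on this piece. I would handle it by the singular perturbation rescaling $x-x_v=\e\xi$, $t-\T^*=\e\tau$: in these fast variables $\widetilde Z_\e$ becomes an $O(1)$ planar flow and the fold curves $x_T^\pm$ become regular profiles in $\xi$. Tools from geometric singular perturbation theory (a slow-manifold / Fenichel-type matching adapted to the two-fold) then glue the fast sliding segment with the slow crossing half-flows in a $C^0$-consistent way, and a standard implicit function theorem applied to the resulting glued return map delivers the sliding periodic solution. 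This gluing is the step that departs genuinely from the classical Melnikov scheme and realises the novelty announced in the introduction.
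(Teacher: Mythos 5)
Your proposal follows the same route as the paper: fold curves $\ell_v^\pm(\T;\e)$ opening a sliding/escaping strip of width $\CO(\e)$, the Melnikov function $M(\T,x_v)$ and $g_\T$ controlling the displacement, and tools from geometric singular perturbation theory (Fenichel) to deal with the $\CO(\e)$ sliding segment. The paper's proof of (a) is precisely the glued-map construction you sketch: it builds the displacement between the $X^-_\e$ half-orbit emanating from $\ell_v^-(\T;\e)$ and the backward $X^+_\e$ half-orbit emanating from a point $x_s(\tau,\T;\e)=x_v+\e k(\tau,\T)+\CO(\e^2)$ on the sliding trajectory, writes the sliding flow as a slow-fast system, identifies the repelling critical manifold $\{x=x_v\}$, and applies Fenichel.

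Two points in your write-up deserve sharpening, although they do not change the conclusion. First, the statement that for case (a) the crossing candidate ``lands strictly inside $\Sigma^s_\e$'' is slightly stronger than what the hypothesis gives: hypothesis (a) is equivalent to the leading-order crossing candidate $\chi^*$ lying \emph{above} the lower fold $\nu_v^-$, not necessarily below the upper fold $\nu_v^+$. The paper sidesteps this by reformulating the condition directly as a solvability requirement for the fast time $\tau^*$. Second, the ``standard implicit function theorem on the glued map'' conceals a genuine step: because the critical manifold is normally hyperbolic repelling, the fast parametrization $k(\tau,\T)=m_1(\T)+\dfrac{F_1(p_v)}{p(\T)}e^{\tau p(\T)}$ contains an exponential, so the first component of the displacement at $\e=0$ reads
\begin{equation*}
\tau+\dfrac{1}{p(\T^*)}e^{\tau p(\T^*)}+A(\T^*)=0,
\end{equation*}
a transcendental equation in $\tau$. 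The paper solves it explicitly via the Lambert $W$--function and uses the property $W(e^\beta)>\beta\iff\beta<1$ to translate the requirement $\tau^*<0$ into the stated inequality on $g_{\T^*}$. Your outline asserts this equivalence but does not flag that it hinges on a transcendental, not polynomial, zeroth-order equation. Including that observation would make the ``gluing'' step fully precise.
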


\section{A piecewise Hamiltonian model}\label{exam}
In this section, we apply the previous results to study the crossing and sliding periodic solutions of non-autonomous 
perturbations of a piecewise Hamiltonian model.  
This kind of problem had been previously addressed in \cite{KKY}, where the authors applied KAM theory to prove that, under certain conditions, a piecewise Hamiltonian model has  infinitely many periodic solutions.

Consider the following continuous Hamiltonian function
\[
H(x,y)=|y|-\dfrac{x^3}{3}+\al x, \quad \text{where} \quad \al>0.
\]
As usual, $|\cdot|$ denotes the absolute value of a real number. 
The above Hamiltonian gives rise to the following discontinuous piecewise Hamiltonian differential system
\begin{equation}\label{ZH0}
(x',y')^T=Z_0^\al(x,y)=(\sgn(y),x^2-\al)  =\left\{\begin{array}{l}  (1,x^2-\al)  \quad \textrm{if} \quad y>0,\vspace{0.2cm}\\
(-1,x^2-\al)    \quad \textrm{if}\quad y<0,
\end{array}\right.
\end{equation} 
The switching surface is given by $\Sigma=\{(x,0),\,x\in\R\}$. 
Its phase space is depicted in Figure \ref{piecewise}. 
Following the notation of the previous Section we take 
\[
F^-(x,y)=(-1,x^2-\al), \quad  F^+(x,y)= (1,x^2-\al). 
\]
Notice that the above  piecewise Hamiltonian differential system $Z_0^\al$ is $R$--reversible with $R(x,y)=(x,-y)$. 
In addition, it has two two-fold singularities, one invisible $p_i=(x_i,0)=(-\sqrt{\al},0)$ and other visible $p_v=(x_v,0)=(\sqrt{\al},0)$. 

The solution $\G^-(t,x,y)$ of  $(x',y')^T =F^-(x,y)$  can be easily computed as
\[
\Gamma^-(t,x,y)=\left(-t + x, \displaystyle\frac{1}{3} (t^3 - 3 t^2 x + 3 t x^2 +3 y- 3 t \alpha)\right).
\]
Furthermore, for each $-\sqrt{\al} < x \leq  \sqrt{\al}$, it is straightforward to see that $\Gamma^-(t,x,0)$ reaches transversely 
$\Sigma$ for 
$t=\ov \sigma(x)=\displaystyle \frac{1}{2} (3 x + \sqrt{3} \sqrt{-x^2 + 4 \alpha})$.
Hence, for $-\sqrt{\al} < x \leq \sqrt{\al}$,  the reversibility property implies that the solution $\gamma(t,x)$ of \eqref{ZH0}, 
satisfying $\gamma(0,x)=(x,0)$, is given by
\[
\gamma(t,x)=\left\{
\begin{array}{ll}
\left(-t + x, \displaystyle\frac{1}{3} (t^3 - 3 t^2 x + 3 t x^2 - 3 t \alpha) \right)&\textrm{if}\quad 0\leq t \leq \ov{\sigma}(x),\vspace{0.2cm}\\
\left(t + x, \displaystyle\frac{1}{3} (t^3 + 3 t^2 x + 3 t x^2 - 3 t \alpha)\right)&\textrm{if}\quad -\ov{\sigma}(x)\leq t \leq 0\vspace{0.2cm},\\
\end{array}\right.
\]
From the formula of $\ov \sigma (x)$, one obtains an explicit expression for the point $(x_\sigma,0)$ satisfying $\ov \sigma(x_{\sigma})=\sigma$,
\begin{equation}\label{eq:xsigma}
x_\si = \displaystyle \frac{1}{6} (3 \si - \sqrt{3} \sqrt{12 \al - \si^2})\in ( -\sqrt{\al},\sqrt{\al}). 
\end{equation}
Accordingly, $Z_0^\al$ satisfies hypotheses $(h_1)$ and $(h_2)$. 
Furthermore, since $\ov \sigma(\sqrt{\alpha}) =3\sqrt{\alpha}$, we get 
$\CS = \{ \gamma(t,\sqrt{\alpha}): -3\sqrt{\alpha}\leq t \leq 3\sqrt{\alpha}  \}$ 
(see Figure \ref{piecewise}).  Clearly $\CS \cap \Sigma =\{p_v,q_v\},$ with $q_v =\Gamma^-(\ov{\sigma}(\sqrt{\alpha}),\sqrt{\alpha},0)=(-2\sqrt{\alpha},0)$.

\subsection{non-autonomous perturbation}

Now, in order to illustrate the application of Theorems \ref{ta} and \ref{tb}, we consider the following non-autonomous perturbation of \eqref{ZH0}.
\begin{equation}\label{eq1}
(x',y')^T=Z_\e(x,y)=\left\{\begin{array}{l} F^+(x,y) +\e G^+(t,x,y)\quad \textrm{if} \quad y>0,\vspace{0.2cm}\\
F^-(x,y) +\e G^-(t,x,y) \quad \textrm{if}\quad y<0,
\end{array}\right. 
\end{equation}
where
\[
G^+(t,x,y)=\left(0,\la \sin  \frac{\pi t}{\si} \right)\quad \text{and} \quad G^-(t,x,y) =\left(0,\sin  \frac{\pi t}{\si}\right),
\] 
for some $\lambda \in \R$. Notice that $G^{\pm}(t,x,y)$ are $2\si$--periodic in the variable $t$. 
We shall see that, for convenient values of $\lambda$, system \eqref{eq1} satisfies the hypotheses either of Theorem \ref{ta} or Theorem \ref{tb}. 

The fundamental matrix solution $Y(t,x,y)=Y^-(t,x,y)$, defined in \eqref{Ycolum}, is given by
\[
Y(t,x,y) = \left( \begin{matrix}
1 & 0 \\
- t^2 + 2 t x &1
\end{matrix} \right).
\]
Thus, we compute the function \eqref{mel} as
\begin{align*}
M(\T,x) =&\displaystyle \frac{\si}{\pi} \bigg(   \la \cos \Big[\frac{\pi (3 x + \sqrt{3} \sqrt{ 4 \alpha-x^2} + 2 \T)}{2 \si}\Big]  +\cos \Big[\frac{\pi (3 x + \sqrt{3} \sqrt{ 4 \alpha-x^2} - 2 \T)}{2 \si}\Big] \\ 
& -(1+\la)\cos\Big[ \frac{\pi \T}{\si }\Big]  \bigg).
\end{align*}

In the next result, as an application of Theorem \ref{ta}, we show that system \eqref{eq1} has  two crossing periodic solutions, 
provided that the period of the perturbation is strictly less than $6\sqrt{\al}$.
\begin{proposition}\label{prop:exA}
Assume that $\la\neq-1$. 
Then, for each $\sigma\in(0,3\sqrt{\al})$ and for $\e > 0$ sufficiently small,  there exist two crossing $2\sigma$--periodic solutions of system 
\eqref{eq1} with initial conditions $\e$-close to $(3\si/2,(x_{\si},0))$ and $(\si/2,(x_{\si},0))$, respectively (see Figure \ref{fig-exampleA}). 
\end{proposition}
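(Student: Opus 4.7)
The plan is to apply Theorem \ref{ta} directly, since the preceding material has already verified that $Z_0^\al$ satisfies $(h_1)$ and $(h_2)$, and has produced the explicit formulas for $\overline\sigma(x)$, $x_\sigma$, and the Melnikov function $M(\T,x)$. Fix $\sigma\in(0,3\sqrt{\al})$ and take $x_\sigma$ as in \eqref{eq:xsigma}. First, I would check the nondegeneracy condition $\overline\sigma'(x_\sigma)\neq 0$: differentiating $\overline\sigma(x)=\tfrac12\bigl(3x+\sqrt3\sqrt{4\al-x^2}\bigr)$ gives $\overline\sigma'(x)=\tfrac12\bigl(3-\sqrt3\,x/\sqrt{4\al-x^2}\bigr)$, whose only zero lies at $x=\sqrt{3\al}>\sqrt\al$, hence outside $(-\sqrt\al,\sqrt\al)$. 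In fact $\overline\sigma'>0$ on the whole range, which also confirms that $\overline\sigma$ maps $(-\sqrt\al,\sqrt\al]$ bijectively onto $(0,3\sqrt\al]$, so $x_\sigma$ is well defined and interior for $\sigma\in(0,3\sqrt\al)$.

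Next I would simplify $M(\T,x_\sigma)$ using the defining identity $3x_\sigma+\sqrt3\sqrt{4\al-x_\sigma^2}=2\sigma$. Substituting this into the two cosines in the formula for $M$, their arguments become $\pi+\pi\T/\sigma$ and $\pi-\pi\T/\sigma$ respectively, both yielding $-\cos(\pi\T/\sigma)$. Collecting terms produces the clean expression
\[
M(\T,x_\sigma)=-\frac{2\sigma(1+\la)}{\pi}\cos\!\left(\frac{\pi\T}{\sigma}\right).
\]
Since $\la\neq-1$, the zeros of $M(\cdot,x_\sigma)$ on $\s^1_\sigma$ are precisely $\T_1^*=\sigma/2$ and $\T_2^*=3\sigma/2$.

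Finally I would check simplicity. Differentiating, $\partial_\T M(\T,x_\sigma)=2(1+\la)\sin(\pi\T/\sigma)$, which at $\T_1^*$ and $\T_2^*$ equals $\pm 2(1+\la)\neq 0$. Thus both $\T_i^*$ are simple zeros, and Theorem \ref{ta} produces, for each $i\in\{1,2\}$ and for all sufficiently small $\e>0$, an isolated crossing $2\sigma$-periodic solution of \eqref{eq1} with initial condition $\e$-close to $(\T_i^*,(x_\sigma,0))$, giving the two solutions announced in the statement.

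The computation is essentially routine; the only delicate point is recognizing how the constraint defining $x_\sigma$ collapses the two traveling-wave cosines in $M$ into multiples of $\cos(\pi\T/\sigma)$, so that the $\T$-dependence factors out. No obstacle beyond this algebraic simplification is expected.
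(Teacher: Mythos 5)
Your proof is correct and follows essentially the same route as the paper: substitute the defining relation $\ov\si(x_\si)=\si$ into the Melnikov formula to collapse the two traveling cosines, obtain $M(\T,x_\si)=-\tfrac{2(1+\la)\si}{\pi}\cos(\pi\T/\si)$, and then read off the two simple zeros $\T=\si/2$ and $\T=3\si/2$. You are in fact slightly more careful than the paper in one respect: you explicitly verify the hypothesis $\ov\si'(x_\si)\neq 0$ of Theorem \ref{ta} (showing $\ov\si'>0$ on the relevant interval), a step the paper's proof leaves implicit.
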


\begin{proof}
Given $\si \in (0,3 \sqrt{\al})$, notice that  $\ov\si(x_\si) =\sigma$  if, and only if, 
$x_\si = \displaystyle \frac{1}{6} (3 \si - \sqrt{3} \sqrt{12 \al - \si^2})\in ( -\sqrt{\al},\sqrt{\al})$ (see \eqref{eq:xsigma}). 
Then,
\[
\begin{array}{ll}
M(\T,x_\si) =& -\displaystyle\frac{2(1+\la)\si}{\pi}  \cos\left( \frac{\pi \T}{\si} \right),
\end{array}
\]
where we used the following relation
\[
\sqrt{36\al+2\si\left(\sqrt{36\al-3\si^2}-\si\right)}=\si+\sqrt{36\al-3\si^2},
\]
for every $\al>0$ and $\si\in[0,2\sqrt{3\al}]$.

Solving $M(\T,x_\si)=0$, for $\T\in\s^1_{\si}$, we get $\T^*_1=3\si/2$ and $\T^*_2=\si/2$.
Moreover,
\[
\dfrac{\p M}{ \p \T}(\T^*_2,x_\si)=-\dfrac{\p M}{ \p \T}(\T^*_1,x_\si)=2(1+\la)\neq0.
\]
Hence, the proof follows from Theorem \ref{ta}.
\end{proof}

\begin{figure}[H]
\begin{center}
\begin{overpic}[width=12cm]{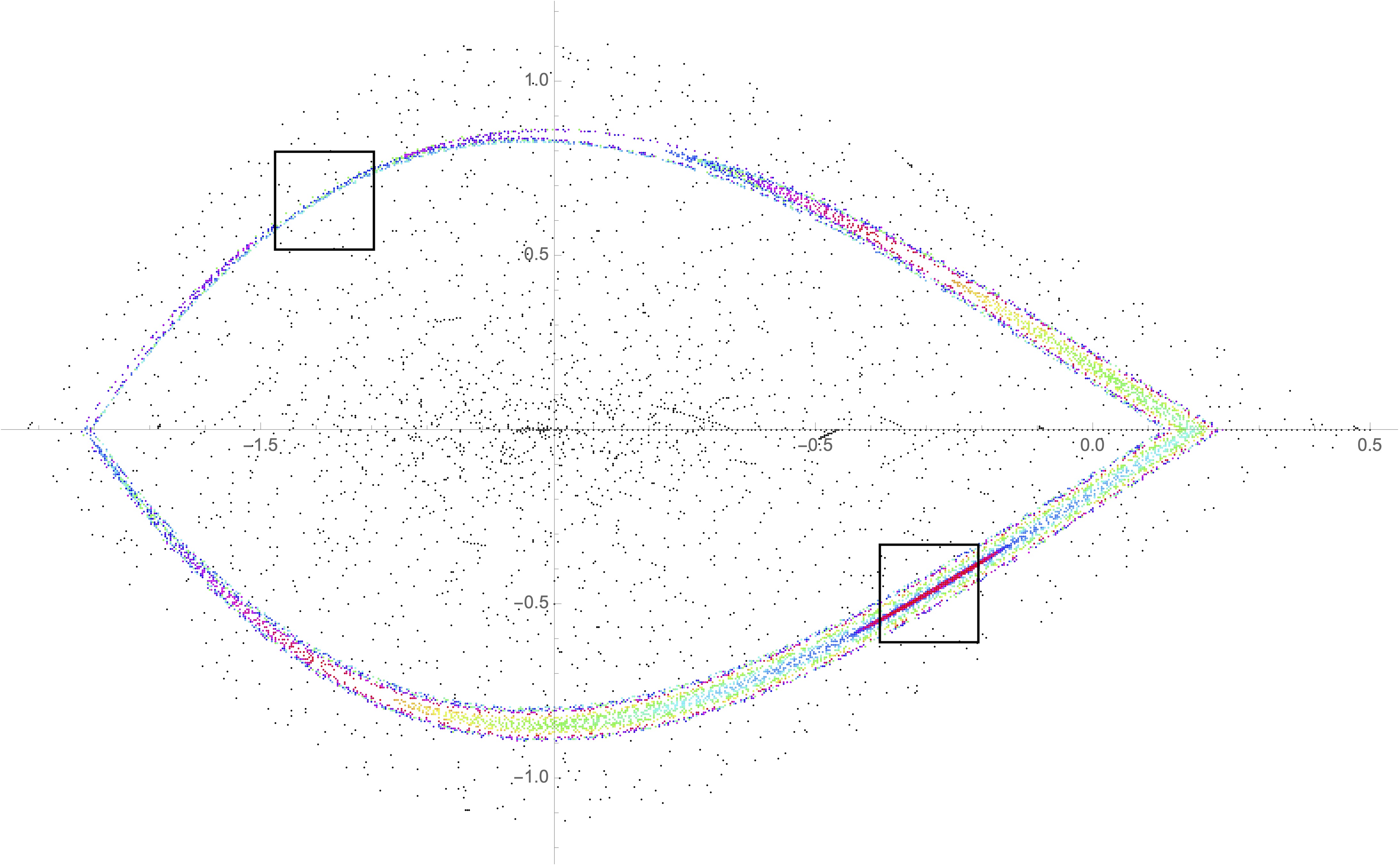}
\put(101,31){$x$} 
\put(39,63){$y$} 
\end{overpic}
\end{center}

\bigskip

\caption{Numerical simulation of the Poincar\'{e} map of system \eqref{eq1} assuming $\alpha=1,$ $\lambda=2,$ $\sigma=2,$ and $\e=1/1500.$ The boxes spot the fixed points corresponding to crossing periodic solutions predicted by Proposition \ref{prop:exA}.}\label{fig-exampleA}
\end{figure}

In the next result, as an application of Theorem \ref{tb}, we were able to detect crossing and sliding periodic solutions of system \eqref{eq1} 
provided that the period of the perturbation is equal to $6\sqrt{\al}$.
\begin{proposition} \label{prop2} Assume that $\si=3\sqrt{\al}$ and $\la\neq-1.$ Then, for $\e > 0$ sufficiently small, the following statement holds
\begin{itemize}
\item [$(i)$] 
for $\la\neq 1$, there exists a sliding $6 \sqrt{\al}$--periodic solution of system \eqref{eq1}  with initial condition $\e$-close to 
$(3\sqrt{\alpha}/2,(x_{v},0))$, which slides  either on $\Sigma^s$ or $\Sigma^e$  provided that $\la<1$  or $\la>1$ (see Figure \ref{fig-exampleB});
\item [$(ii)$] 
for $\la<0$, there exists a sliding $6 \sqrt{\al}$--periodic solution of system \eqref{eq1} with initial conditions $\e$-close to 
$( 9\sqrt{\alpha}/2,(x_{v},0))$, which slides on  $\Sigma^e$;
\item [$(iii)$] 
for $\la>0$, there exists a crossing $6 \sqrt{\al}$--periodic solution of system \eqref{eq1} with initial conditions $\e$-close to $( 9\sqrt{\alpha}/2,(x_{v},0))$.
\end{itemize} 
\end{proposition}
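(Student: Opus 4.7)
The plan is to apply Theorem \ref{tb} at each simple zero of $\T\mapsto M(\T,x_v)$ and to read off whether case (a) or case (b) applies according to the sign of the threshold on the right-hand side of that theorem. Since the perturbations $G^{\pm}$ have no spatial dependence, every ingredient reduces to an elementary one-variable computation.

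First, I would specialize the formula for $M(\T,x_\si)$ already derived inside the proof of Proposition \ref{prop:exA} by setting $\si=\si_v=3\sqrt{\al}$ and $x_\si=x_v=\sqrt{\al}$, which yields
\[
M(\T,x_v) = -\dfrac{2(1+\la)\,\si_v}{\pi}\cos\!\Big(\dfrac{\pi\T}{\si_v}\Big).
\]
Since $\la\neq-1$, this has exactly two simple zeros in $\s^1_{\si_v}$, namely $\T^*_1=3\sqrt{\al}/2$ and $\T^*_2=9\sqrt{\al}/2$, at which $\p M/\p\T=\pm 2(1+\la)\neq 0$. Hence the common Melnikov hypothesis of Theorem \ref{tb} is satisfied at both phases.

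Next, I would compute $g_{\T^*_j}$ from \eqref{g}. Using $\G(t,p_v)=(-t+\sqrt{\al},(t^3-3t^2\sqrt{\al})/3)$ together with the already displayed matrix $Y(t,x,y)$, one obtains $D_z\G_2(\si_v,p_v)=(-3\al,1)$ and sees that $Y(t,p_v)^{-1}$ is lower triangular with $1$'s on the diagonal. Because $G^{\pm}$ has zero first component, the integrand in \eqref{g} reduces to a column of the form $(0,h(t))^T$, so only the second entry of $D_z\G_2(\si_v,p_v)$ contributes. Expanding $\sin(\pi(t+\T)/\si_v)$ and $\sin(\pi(-t+\T)/\si_v)$ with the addition formulas, the $\cos(\pi t/\si_v)$ mode integrates to zero over $[0,\si_v]$ while the $\sin(\pi t/\si_v)$ mode contributes $2\si_v/\pi$, leaving
\[
g_\T = \dfrac{2\si_v(1-\la)}{\pi}\cos\!\Big(\dfrac{\pi\T}{\si_v}\Big).
\]
In particular $g_{\T^*_1}=g_{\T^*_2}=0$, so the dichotomy in Theorem \ref{tb} reduces entirely to the sign of its right-hand threshold.

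Finally, I would assemble the threshold data. A direct computation gives $2F_2(q_v)/(F_1(p_v)\,\dfrac{\p F_2}{\p x}(p_v))=2(3\al)/((-1)\cdot 2\sqrt{\al})=-3\sqrt{\al}$, while the pair $(G^-_2,G^+_2)(\T^*_1,p_v)=(1,\la)$ and $(G^-_2,G^+_2)(\T^*_2,p_v)=(-1,-\la)$. At $\T^*_1$ the threshold $-3\sqrt{\al}\max\{1,\la\}$ is strictly negative for every $\la\neq-1$, so $0=g_{\T^*_1}$ exceeds it; provided also $\la\neq 1$ (to ensure $G^+_2\neq G^-_2$), Theorem \ref{tb}(a) delivers the sliding orbit claimed in (i), sliding on $\Sigma^s$ when $\la<1$ and on $\Sigma^e$ when $\la>1$. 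At $\T^*_2$ the threshold $-3\sqrt{\al}\max\{-1,-\la\}$ is negative when $\la<0$ and positive when $\la>0$; consequently case (a) with $G^+_2>G^-_2$ (sliding on $\Sigma^e$) applies when $\la<0$, proving (ii), whereas case (b) applies when $\la>0$, proving (iii). The main delicacy throughout is the sign bookkeeping inside the reversibility bracket $\{G^-,-G^+\}_{\T}$ (in particular, tracking the flip that $R$ produces on the second component of $G^+$); once this is done, the algebraic cancellation $g_{\T^*_j}=0$ makes the entire crossing-versus-sliding dichotomy depend only on the sign of $\max\{G^{\pm}_2(\T^*,p_v)\}$.
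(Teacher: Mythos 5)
Your proof is correct and follows essentially the same route as the paper: compute $M(\T,x_v)$, find its two simple zeros, compute $g_{\T}$ (and observe it vanishes at both zeros), evaluate the threshold constant and the folded perturbation components, then read off cases (a)/(b) of Theorem~\ref{tb}. The only difference is that you make explicit the mechanism behind the collapse of the $g_{\T}$ integral (lower-triangular $Y^{-1}$, zero first component of $G^{\pm}$, vanishing $\cos$ mode), which the paper states without commentary.
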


\begin{remark}
Notice that, from Proposition \ref{prop2}, sliding and crossing periodic solutions may coexist. 
More specifically, comparing the statements $(i)$ and $(ii)$ we get 
\begin{itemize}
\item 
For $\lambda\in(-\infty,0)\setminus\{-1\},$ there exists two sliding $6 \sqrt{\al}$--periodic solutions. 
One with initial condition $\e$-close to $(3\sqrt{\alpha}/2,(x_{v},0))$, which slides on $\Sigma^s$, and another with initial conditions $\e$-close to 
$( 9\sqrt{\alpha}/2,(x_{\si},0))$, which slides on  $\Sigma^e$.
\item 
For $\lambda\in(0,1),$ there exist a crossing $6 \sqrt{\al}$--periodic solution with initial conditions $\e$-close to 
$( 9\sqrt{\alpha}/2,(x_{\si},0))$  and a sliding $6 \sqrt{\al}$--periodic solution  with initial condition $\e$-close to 
$(3\sqrt{\alpha}/2,(x_{v},0))$, which slides  on $\Sigma^s$.
\item 
For $\lambda\in(1,+\infty),$ there exist a crossing $6 \sqrt{\al}$--periodic solution with initial conditions $\e$-close to 
$( 9\sqrt{\alpha}/2,(x_{\si},0))$  and a sliding $6 \sqrt{\al}$--periodic solution  with initial condition $\e$-close to 
$(3\sqrt{\alpha}/2,(x_{v},0))$, which slides  on $\Sigma^e$.
\end{itemize}
\end{remark}

\begin{proof}[ Proof of Proposition \ref{prop2}]
If  $\si = 3\sqrt{\alpha}$, then $G^{\pm}(t,x,y)$ are $6 \sqrt{\al}$--periodic in the variable $t$, and 
\[
M(\T,\sqrt{\alpha}) =   \displaystyle \frac{-6 \sqrt{\alpha} (1+\la)}{\pi}  \cos\left(\frac{\pi \T} {3 \sqrt{\alpha}}\right).
\]
Solving $M(\T^*,\sqrt{\alpha})=0$, for $\T\in[0,6\sqrt{a}]$, we get $\T^*_1 = 3\sqrt{\alpha}/2 $ and $\T^*_2 = 9\sqrt{\alpha}/2$. Moreover,
\[
\displaystyle \frac{\p M}{ \p \T}(\T^*_2,\sqrt{\alpha})=-\displaystyle \frac{\p M}{ \p \T}(\T^*_1,\sqrt{\alpha})=  2(1+\la)\neq0,
\]
and  $g_{\T}=\displaystyle \frac{6 \sqrt{\alpha}(1 -\la)}{\pi}  \cos  \left( \frac{\pi \T} {3 \sqrt{\alpha}} \right).$ Thus, $g_{\T^*_{1,2}}=0$. Furthermore,
$G^{+}_2(\T^*_n,p_v)= (-1)^{(1 + n)} \la $, $G^{-}_2(\T^*_n,p_v) =   (-1)^{(1 + n)}$, for $n=1,2$, and  
 \[
\dfrac{2F_2(q_v)}{F_1(p_v)\dfrac{\p F_2}{\p x}(p_v)}=  -3 \sqrt{\alpha}.
\]

To obtain statement $(i)$ notice that, for $\la\neq1$, $ G_2^+(\T_1^*,p_v) \neq  G_2^-(\T_1^*,p_v)$. In this case, 
\[ 
g_{\T_1^*} =0> -3 \sqrt{\alpha}  \max\big\{G^{\pm}_2(\T^*_1,p_v)\big\}=-3 \sqrt{\alpha}  \max\big\{1,\la\big\}.
\]
Therefore, from statement $(a)$ of Theorem \ref{tb}, there exists a sliding $6\sqrt{\al}$--periodic solution with initial condition $\e$-close to $(3\sqrt{\al}/2,p_v)$. 
Moreover, for $\la>1$, we have $ G_2^+(\T_1^*,p_v)> G_2^-(\T_1^*,p_v),$ which implies that this periodic solution slides on $\Sigma^e$. 
Analogously, for $\la<1$, we have $G_2^+(\T_1^*,p_v)< G_2^-(\T_1^*,p_v),$ which implies that this periodic solution slides on $\Sigma^s$.

To obtain statement $(ii)$ notice that,  for $\la <0$, we have
\[
g_{\T_2^*}=0 > -3 \sqrt{\alpha}  \max\big\{G^{\pm}_2(\T^*_2,p_v)\}=-3 \sqrt{\alpha} \max\big\{-1,-\la\}=3 \sqrt{\alpha} \la.
\]
Therefore, from statement $(a)$ of Theorem \ref{tb}, there exists a sliding $6\sqrt{\al}$--periodic solution with initial condition $\e$-close to $(9\sqrt{\al}/2,p_v)$
Moreover,  in this case, $G_2^+(\T_2^*,p_v)>G_2^-(\T_2^*,p_v)$, which implies that this periodic solution slides on $\Sigma^e$.

Finally, to obtain statement $(iii)$ notice that, for $\la >0$, we have 
\[ 
g_{\T_2^*} =0< -3 \sqrt{\alpha}  \max\big\{G^{\pm}_2(\T^*_2,p_v)\}=-3 \sqrt{\alpha} \max\big\{-1,-\la\}.
\]
Therefore, from statement $(b)$ of Theorem \ref{tb}, there exists a crossing $6\sqrt{\al}$--periodic solution with initial condition $\e$-close to $(9\sqrt{\al}/2,p_v)$.
\end{proof}

\begin{figure}[H]
\begin{minipage}{8cm}
\centering
\begin{overpic}[width=8.5cm]{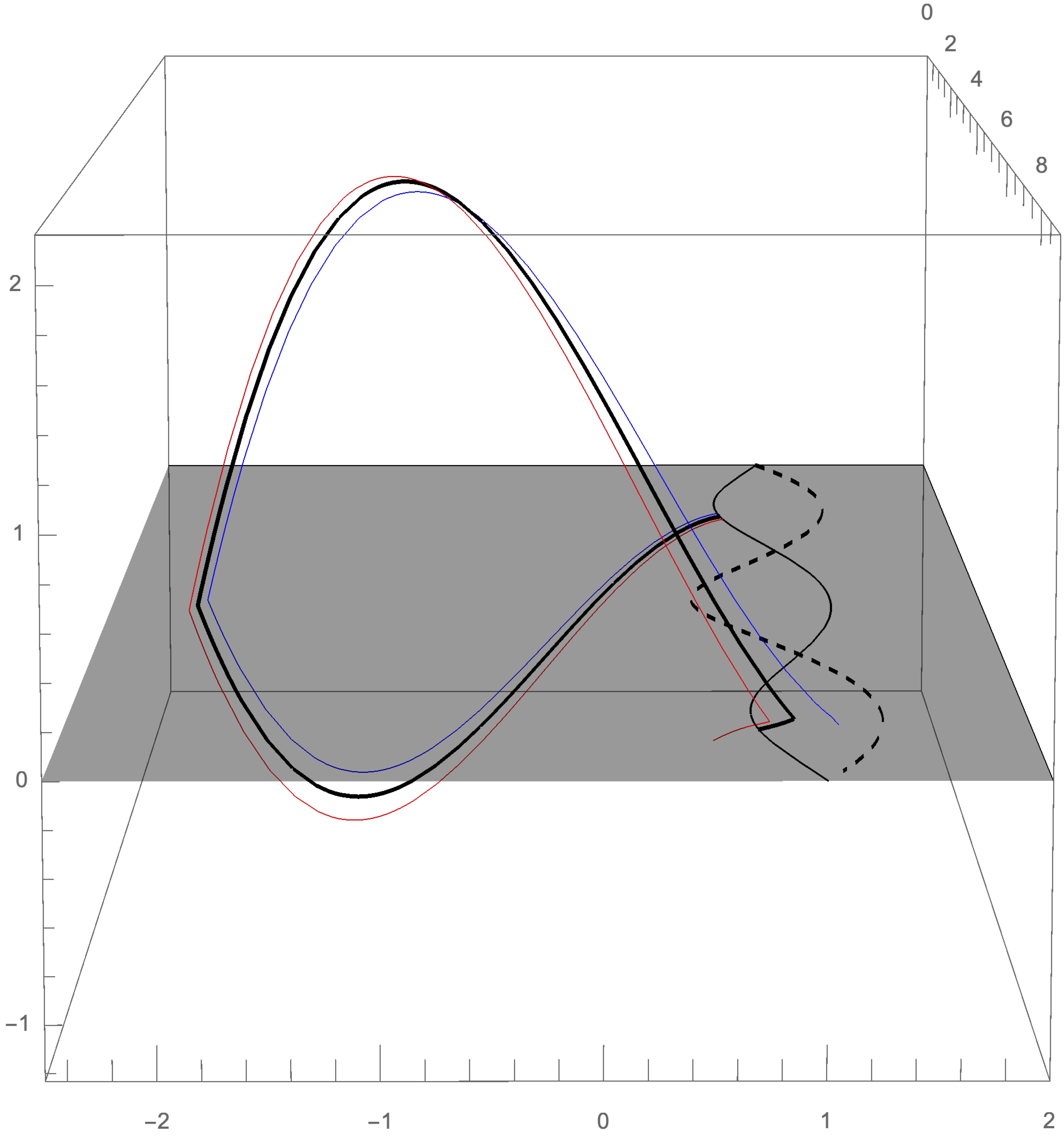}
\linethickness{0.75pt}
\put(68.5,35.5){\vector(1,-0.5){42}}
\put(148,43){\vector(-0.3,-1){8.5}}
\put(110,10){sliding segment}
\put(7,33){$\Sigma$} 
\put(18,8){$x$} 
\put(0,20){$y$} 
\put(10,20){$t$} 
\end{overpic}
\end{minipage}
\begin{minipage}{6.4cm}
\centering
\begin{overpic}[width=5.5cm]{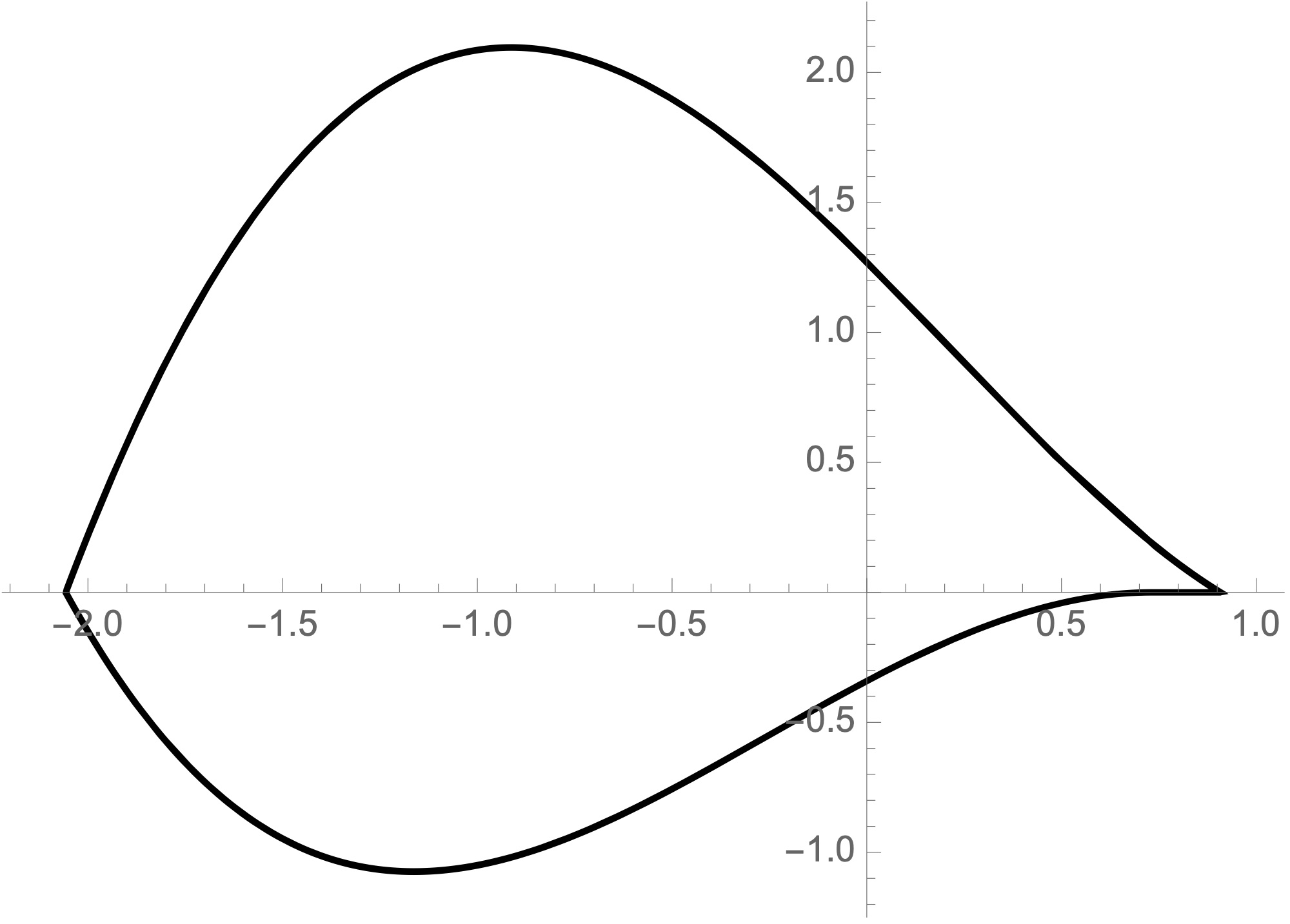}
\put(102,26){$x$} 
\put(65,75){$y$} 
\end{overpic}
\end{minipage}

\caption{Numerical simulation of a sliding periodic solution predicted by Proposition \ref{prop2} (i) for system \eqref{eq1} assuming $\alpha=1,$ $\lambda=-3/2,$ $\sigma=3\sqrt{\al}=3,$ and $\e=1/2.$ The bold trajectory starts at the $2\sigma$-periodic visible fold curve of $X^-_{\e},$ with initial time condition $t_0$ near to $3\sqrt{\al}/2,$ then it crosses the discontinuity manifold, reaches the sliding region, and slides on it reaching again the visible fold curve of $X^-_{\e}$ at a time $t_0+2\sigma.$}\label{fig-exampleB}
\end{figure}

\section{Proofs of Theorems \ref{ta} and \ref{tb}}\label{proof}

Recall that to study a  non-autonomous periodic differential equation 
$w'=f(t,w)$, $(t, z)\in \s^1\times D$ 
we can work in the extended phase space adding time as a variable   
$\T'=1$ and $ v'=f(\T,v)$. If $(\T(t),v(t))$ is a solution of the autonomous system such that $(\T(0),v(0))=(\ov{\T},\ov v),$ 
then $v'(t)=f(\ov \T+t,v(t))$ and $w(t):=v(t-\ov \T)$ is the solution of the non-autonomus system such that $w(\ov \T)=\ov v$.

Accordingly, we study system \eqref{ps1} in the extended phase space
\begin{equation}\label{s1b}
\T'=1, \quad (x',y')^T=Z_\e(\T,x,y),
\end{equation}
where $(\T, x,y)\in \s_{\si}^1\times D$, $D\subset\R^2$, 
being $\s^1_{\si}\equiv \R/(2\si\Z)$. 
We note that \eqref{s1b} is also a Filippov system having $\widetilde{\Sigma}=\s^1_{\si}\times \Sigma$ as its discontinuity manifold. 
Moreover, $\widetilde{\Sigma}=\tilde h^{-1}(0)$ for $\tilde h(\T,x,y)=y$. 

Let $z\in D$, the solutions $\Phi^{\pm}(t,\T,z;\e)$ of \eqref{s1b}, restricted to $y\gtrless 0$, such that $\Phi^{\pm}(0,\T,z;\e)=(\T,z)$ are given as 
\begin{equation*}\label{Phisol}
\Phi^{\pm}(t,\T, z;\e)=\left(t+\T,\xi^{\pm}(t,\T, z;\e)\right),
\end{equation*}
where $\xi^{\pm}(t,\T,z;\e)$ are solutions of
\begin{equation}\label{auxs1}
\xi'=X^{\pm}_{\e}(t+\T,\xi), \quad \xi(0)=z, \quad \xi\in D.
\end{equation}

\begin{lemma}\label{fl}
Fix $T>0$, $\T\in\s^1_{\si}$, $z_0\in D$, and $z_1\in \R^2$. Let
\begin{equation}\label{psi}
\psi^{\pm}(t,\T,z_0,z_1)=Y^{\pm}(t,z_0)\left(z_1+\int_0^tY^{\pm}(s,z_0)^{-1}G^{\pm}\left(s+\T,\G^{\pm}(s,z_0)\right)ds\right),
\end{equation}
where $Y^{\pm}$ are the fundamental solutions \eqref{Ycolum} of the variational equations \eqref{vareq}. 
Then, for $\e>0$ small enough,  $z_0+\e z_1\in D$  and the next equality holds 
 $$\xi^{\pm}(t,\T_0+\e \T_1,z_0+\e z_1;\e)=\G^{\pm}(t,z_0)+\e \psi^{\pm}(t,\T_0,z_0,z_1)+\CO(\e^2),\, t\in[-T,T]$$
\end{lemma}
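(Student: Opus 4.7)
The plan is to recognize Lemma \ref{fl} as a standard regular-perturbation statement for the ODE \eqref{auxs1}. Since $F^{\pm}$, $G^{\pm}$, and $H^{\pm}$ are smooth, classical results on smooth dependence of ODE solutions on parameters and initial data ensure that, for $\e$ sufficiently small and $t\in[-T,T]$, the map $(\e,\T_1,z_1)\mapsto \xi^{\pm}(t,\T_0+\e\T_1,z_0+\e z_1;\e)$ is of class $\CC^2$. In particular, it admits a Taylor expansion of the form $\G^{\pm}(t,z_0)+\e\,w^{\pm}(t)+\CO(\e^2)$ with remainder uniform in $t$ on the compact interval $[-T,T]$. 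Openness of $D$ immediately gives $z_0+\e z_1\in D$ for all sufficiently small $\e$.

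Next I would identify $w^{\pm}(t)$. Writing the ODE \eqref{auxs1} satisfied by $\xi^{\pm}(t,\T_0+\e\T_1,z_0+\e z_1;\e)$ as
\[\xi'=F^{\pm}(\xi)+\e\,G^{\pm}(t+\T_0+\e\T_1,\xi)+\e^2 H^{\pm}(t+\T_0+\e\T_1,\xi;\e),\quad \xi(0)=z_0+\e z_1,\]
and differentiating with respect to $\e$ at $\e=0$, while recalling that the zeroth-order term is $\G^{\pm}(t,z_0)$ by uniqueness, I obtain the linear inhomogeneous variational equation
\[(w^{\pm})'(t)=DF^{\pm}(\G^{\pm}(t,z_0))\,w^{\pm}(t)+G^{\pm}(t+\T_0,\G^{\pm}(t,z_0)),\]
with the initial condition $w^{\pm}(0)=z_1$. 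Observe that $\T_1$ drops out at this order because it enters only through the $\e^2$-coefficient in the Taylor expansion of $G^{\pm}(t+\T_0+\e\T_1,\xi)$ in $\e$; this explains its absence from the formula for $\psi^{\pm}$.

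The final step is to solve this linear problem by variation of parameters. By definition \eqref{Ycolum}--\eqref{vareq}, $Y^{\pm}(\cdot,z_0)$ is the fundamental matrix solution of the homogeneous equation $W'=DF^{\pm}(\G^{\pm}(t,z_0))\,W$ with $Y^{\pm}(0,z_0)=I_2$. The standard variation-of-parameters formula therefore gives the unique solution
\[w^{\pm}(t)=Y^{\pm}(t,z_0)\left(z_1+\int_0^t Y^{\pm}(s,z_0)^{-1}\,G^{\pm}(s+\T_0,\G^{\pm}(s,z_0))\,ds\right)=\psi^{\pm}(t,\T_0,z_0,z_1),\]
which is precisely the desired identity. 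There is no serious conceptual obstacle; the only technical point is the uniform $\CO(\e^2)$ estimate on $[-T,T]$. This is handled either by invoking Taylor's theorem with integral remainder for the $\CC^2$ map furnished by smooth dependence on parameters, or, more concretely, by subtracting the first-order expansion from $\xi^{\pm}$ and applying Gronwall's inequality to the integral equation governing the remainder on the compact interval $[-T,T]$.
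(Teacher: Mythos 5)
Your proposal is correct and follows essentially the same route as the paper: posit the first-order expansion, differentiate the perturbed ODE to obtain the inhomogeneous variational equation for the $\e$-coefficient with initial condition $z_1$, and solve it by variation of parameters using the fundamental matrix $Y^{\pm}$. Your added remarks explaining why $\T_1$ only contributes at order $\e^2$ and how to justify the uniform $\CO(\e^2)$ remainder on $[-T,T]$ are welcome elaborations but do not change the argument.
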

\begin{proof}
Computing the derivative in the variable $t$ in both sides of the equality $\xi^{\pm}(t,\T_0+\e \T_1,z_0+\e z_1;\e)=\G^{\pm}(t,z_0)+\e \Psi^{\pm}(t)+\CO(\e^2)$ we obtain
\[
\begin{array}{L}
F^{\pm}\left(\xi^{\pm}(t,\T_0+\e\T_1,z_0+\e z_1;\e)\right)+\e G^{\pm}\left(t+\T_0+\e\T_1,\xi^{\pm}(t,\T_0+\e\T_1,z_0+\e z_1;\e)\right)=\vspace{0.2cm}\\
F\left(\G^{\pm}(t,z_0)\right)+\e\dfrac{\p \Psi^{\pm}}{\p t}(t)+\CO(\e^2).
\end{array}
\]  
Expanding in Taylor series the lefthand side of the above equation around $\e=0$, and comparing the coefficient of $\e$ in the both sides, we conclude that
\[
\dfrac{\p \psi^{\pm}}{\p t}(t,\T_0,z_0,z_1)=DF^{\pm}\left(\G^{\pm}(t,z_0)\right)\Psi^{\pm}(t)+G^{\pm}\left(t+\T_0,\G^{\pm}(t,z_0)\right).
\] 
Moreover, $\psi^{\pm}(0,\T_0,z_0,z_1)=z_1$. 
Hence, the solution of the above differential equation is given by \eqref{psi}. 
We observe that $\Psi^{\pm}(t)$ depends on $\T_0,z_0,z_1$, then we denote  $\Psi^{\pm}(t)=\psi^{\pm}(t,\T_0,z_0,z_1)$. 
\end{proof}

Applying Lemma \ref{l1} to the fundamental matrices $Y^\pm$ (see \eqref{Ycolum}) in the expression \eqref{psi} we get
\[
\begin{array}{L}
\psi^{-}(t,\T,z_0,z_1)= Y(t,z_0)\left(z_1+\int_0^tY(s,z_0)^{-1}G^-\left(s+\T,\G(s,z_0)\right)ds\right),\vspace{0.2cm}\\
\psi^{+}(-t,\T,Rz_0,Rz_1)= RY(t,z_0)\left(z_1-\int_0^t Y(s,z_0)^{-1}RG^{+}\left(-s+\T,R\G(s,z_0)\right)ds\right).
\end{array}
\]
Moreover, using that $Y(t,z)=D_z \G(t,z)$ in the first part of the above expressions we have
\begin{equation}\label{psi2}
\begin{array}{L}
\psi_i^{-}(t,\T,z_0,z_1)=\vspace{0.2cm}\\
\quad\Big\langle D_z \Gamma_i(t,z_0)\,,z_1+\int_0^{t}
Y(s,z_0)^{-1}G^{-}\left(s+\T,\G(s,x,0)\right)ds\Big\rangle,\vspace{0.2cm}\\

\psi_i^{+}(-t,\T,Rz_0,Rz_1)=\vspace{0.2cm}\\
\quad(-1)^{i+1}\Big\langle  D_z \Gamma_i(t,z_0)\,,z_1-
\int_0^{t}Y(s,z_0)^{-1}RG^{+}\left(-s+\T,R\G(s,x,0)\right)ds\Big\rangle.\\
\end{array}
\end{equation}
for $i=1,2$.

Observe that for $\e=0$ system \eqref{s1b} has two lines of two-fold points, one invisible $(\T,x_i,0)$ and one visible $(\T,x_v,0)$ (see Figure \ref{fig2}).  
Moreover, for each $\T\in\s^1_{\si}$ and $x_i<x\leq x_v$ there exists a $2\ov\si(x)$--periodic solution $\widetilde{\G}(t,\T,x)=\big(t+\T\,,\,\g(t,x)\big)$, where 
$\gamma$ is given in \eqref{gamma}.
\begin{figure}[h]
\begin{center}
\begin{overpic}[width=12cm]{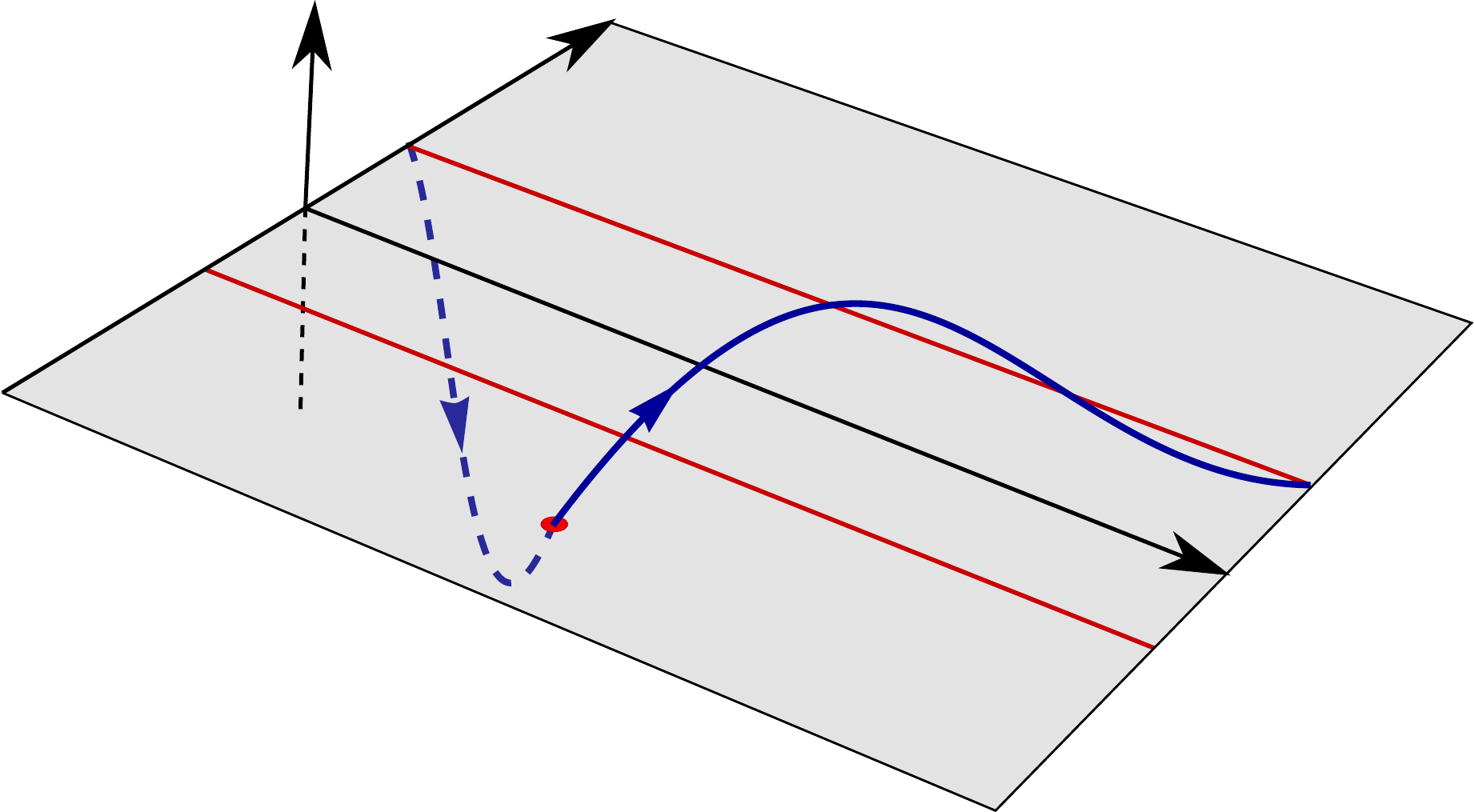}
\put(5,28){$\Sigma$} 
\put(38,15){$\widetilde\G(\si_v,\T_0,p_v)$}
\put(31,45){$(\T_0,p_v)$}
\put(11,38){$x_i$}
\put(23.5,46){$x_v$}
\put(84,15){$\T$}
\put(42,54){$x$}
\put(21,56){$y$}
\end{overpic}
\end{center}

\bigskip

\caption{The $2\si_v$--periodic solution $\widetilde\G(t,\T_0,p_v)$ of the extended system \eqref{s1b}, 
for $\e=0$, passing through the visible two-fold point $(\T_0,p_v)$.}\label{fig2}
\end{figure}

Notice that studying the bifurcation of the fold lines of system \eqref{s1b}, for $\e>0$, is equivalent to study the zeros of the functions
\[
\langle \nabla h(x,0), X^{\pm}_{\e}(\T,(x,0)) \rangle =X_2^{\pm}(\T,(x,0);\e)=F_2^{\pm}(x,0)+\e G^{\pm}_2(\T,x,0)+ \CO(\e^2).
\]
Thus,  by hypothesis (h1), we obtain that for $\e>0$ sufficiently small each one of the lines of visible-visible fold points $(\T,x_{v},0)$ bifurcates in two lines 
$(\T,\ell_v^{\pm}(\T;\e),0)$, one of visible fold points for $X^+_{\e}$ and another of visible fold points for $X^-_{\e}$. 
Analogously, the line of invisible-invisible fold points $(\T,x_{i},0)$ bifurcates in two lines $(\T,\ell_i^{\pm}(\T;\e),0)$, 
one of invisible fold points for $X^+_{\e}$ and another of invisible fold points for $X^-_{\e}$. 
Furthermore,
\begin{equation}\label{fold}
\begin{array}{l}
\ell_{v}^{\pm}(\T;\e)=x_{v}-\e \dfrac{G_2^{\pm}(\T,p_{v})}{\dfrac{\p F_2}{\p x}(p_{v})}+\CO(\e^2)=x_{v}+\e \nu^{\pm}_{v}(\T)+\CO(\e^2),\quad \text{and}\vspace{0.2cm}\\
\ell_{i}^{\pm}(\T;\e)=x_{i}-\e \dfrac{G_2^{\pm}(\T,p_{i})}{\dfrac{\p F_2}{\p x}(p_{i})}+\CO(\e^2)=x_{i}+\e \nu^{\pm}_{i}(\T)+\CO(\e^2).
\end{array}
\end{equation}

In what follows, $\pi_{\T}$,  $\pi_x$, and $\pi_y$ will denote the projections, defined on $\s^1_{\si}\times D,$ onto the first, second, and third coordinates, respectively.

\subsection{Proof of Theorem \ref{ta}}
The idea of this proof is to define a function $\CF:\s^1_{\si}\times(x_i,x_v)\rightarrow\R^2$ which allows us to determine the existence of crossing periodic solutions. 
Given $\T\in\s^1_{\si}$ and $x\in(x_i,x_v),$ we consider the flows $\Phi^-\big(t,\T,(x,0);\e\big)$ and $\Phi^+\big(t,\T+2\si,(x,0);\e\big)$ of \eqref{s1b}. 
If for some $\T_*\in\s^1_{\si}$ and $x_*\in(x_i,x_v)$ there exist $t_*^{-}\ge 0$ and  $t_*^{+}\le 0$ such that 
\begin{equation}\label{condicionesphix*}
\Phi^-\big(t_*^-,\T_*,(x_*,0);\e\big)=\Phi^+\big(t_*^+,\T_*+2\si,(x_*,0);\e\big)\in \widetilde{\Sigma},
\end{equation}
then $t_*^+=t_*^--2\si$ and therefore
\[
\Phi\big(t,\T_*,(x_*,0);\e\big)=\left\{\begin{array}{ll}
\Phi^+\big(t,\T_*+2\si,(x_*,0);\e\big)&\text{if}\quad t_*^+=t_*^--2\si\leq t\leq 0,\vspace{0.1cm}\\
\Phi^-\big(t,\T_*,(x_*,0);\e\big)& \text{if}\quad0\leq t\leq t_*^-
\end{array}\right.
\]
is a $2\si$--periodic crossing solution of system \eqref{s1b}. 
Indeed, this solution is well defined because 
\[
\begin{array}{rl}
\Phi^+\big(0,\T_*+2\si,(x_*,0);\e\big)=&\big(\T_*+2\si,\xi^{+}(0,\T_*+2\si,(x_*,0);\e)\big)\vspace{0.1cm}\\
=&
\big(\T_*+2\si,x_*,0\big),\vspace{0.2cm}\\
\Phi^-\big(0,\T_*,(x_*,0);\e\big)=&\big(\T_*,\xi^{-}(0,\T_*,(x_*,0);\e)\big)=\big(\T_*,x_*,0\big)
\end{array}
\]
and, as we are working in the cylinder $\s^1_{\si}\times D$, these two points are the same. 

In what follows, we show the existence of $\T_*$ and $x_*$ satisfying \eqref{condicionesphix*}.
For $\e=0$ we know that (see \eqref{gamma}) 
$$
\pi_y\Phi^-\big(\ov\si(x),\T,(x,0);0\big)=\xi_2^-\big(\ov\si(x),\T,(x,0);0\big)=
\Gamma_2 (\ov\si(x),x,0)= 0.
$$ 
Since, by  hypothesis $(h2)$, this flow reaches transversally the set of discontinuity $\widetilde\Sigma$ we can apply the {\it implicit function theorem} to obtain a time 
$t^-(\T,x;\e)=\ov\si(x)+\e\,t_1^-(\T,x)+\CO(\e^2)>0$ such that 
$$
\pi_y\Phi^-\big(t^-(\T,x;\e),\T,(x,0);\e\big)=\xi^-_2\big(t^-(\T,x;\e),\T,(x,0);\e\big)=0.
$$
Analogously, 
$$
\pi_y\Phi^+\big(-\ov\si(x),\T+2\si,(x,0);0\big)=\xi_2^+\big(-\ov\si(x),\T+2\si,(x,0);0\big)= -\Gamma_2 (\ov\si(x),x,0)= 0,
$$ 
therefore there exists $t^+(\T,x;\e)=-\ov\si(x)+\e\,t_1^+(\T,x)<0$ such that 
$$
\pi_y\Phi^+\big(t^+(\T,x;\e),\T+2\si,(x,0);\e\big)=\xi^+_2\big(t^+(\T,x;\e),\T+2\si,(x,0);\e\big)=0.
$$

Using the expression for $\xi^{\pm}_2$ given in Lemma \ref{fl} 
we can easily obtain that
\begin{equation}\label{ti}
t_1^{-}(\T,x)=-\dfrac{\psi_2^{-}(+ \ov\si(x),\T,(x,0),(0,0))}{F_2\left(\gamma(\ov\si(x),x,0)\right)},
\end{equation}
and
\begin{equation}\label{ti+}
\begin{array}{rl}
t_1^{+}(\T,x)=&-\dfrac{\psi_2^{+}(- \ov\si(x),\T+2\si,(x,0),(0,0))}{F_2\left(\gamma(\ov\si(x),x,0)\right)}\vspace{0.2cm}\\
=&-\dfrac{\psi_2^{+}(- \ov\si(x),\T,(x,0),(0,0))}{F_2\left(\gamma(\ov\si(x),x,0)\right)},
\end{array}
\end{equation}
where $\gamma$ is defined in \eqref{gamma}. 
Moreover, from \eqref{psi2}, we get
\begin{equation}\label{psii}
\begin{array}{L}
\psi_i^{-}(\ov\si(x),\T,(x,0),(0,0))=\vspace{0.2cm}\\
\quad\Big\langle D_z \Gamma_i(\ov\si(x),x,0)\,,\int_0^{\ov\si(x)}
\!\!\!\!\!\!\!\! Y(s,x,0)^{-1}G^{-}\left(s+\T,\g(s,x)\right)ds\Big\rangle,\vspace{0.3cm}\\
\psi_i^{+}(-\ov\si(x),\T+2\si,(x,0),(0,0))=\vspace{0.2cm}\\
\quad(-1)^i\Big\langle  D_z \Gamma_i(\ov\si(x),x,0)\,,
\int_0^{\ov\si(x)}
\!\!\!\!\!\!\!\! Y(s,x,0)^{-1}RG^{+}\left(-s+\T,R\g(s,x)\right)ds\Big\rangle,
\end{array}
\end{equation}
for $i=1,2$.

Accordingly, define $\CF(\T,x;\e)=(\CF_1(\T,x;\e),\CF_2(\T,x;\e))$ as
\[
\begin{array}{rl}
\CF_1(\T,x;\e)=&\pi_{\T}\Phi^-(t^-(\T,x;\e),\T,(x,0);\e)-\pi_{\T}\Phi^+(t^+(\T,x;\e),\T+2\si,(x,0);\e)\vspace{0.2cm}\\
=&t^-(\T,x;\e)-t^+(\T,x;\e)-2\si=2(\ov\si(x)-\si)+\CO(\e),\vspace{0.3cm}\\
\CF_2(\T,x;\e)=&\pi_{x}\Phi^-(t^-(\T,x;\e),\T,(x,0);\e)-\pi_{x}\Phi^+(t^+(\T,x;\e),\T+2\si,(x,0);\e)\vspace{0.2cm}\\
=&\xi_1^{-}(t^-(\T,x;\e),\T,(x,0);\e)-\xi_1^{+}(t^+(\T,x;\e),\T+2\si,(x,0);\e).
\end{array}
\]
From Lemma \ref{fl}, expressions \eqref{ti} and \eqref{ti+}, the reversibility condition \eqref{eq:F+-}, and using that 
$\g(-\ov\si(x),x)=\g(\ov\si(x),x)$ for $x_i<x<x_v$, we get
\[
\begin{array}{L}
\xi_1^{+}(t^{+}(\T,x;\e),\T+2\si,(x,0);\e)\vspace{0.2cm}\\
\quad=\g_1(\ov\si(x),x)+\e \Big(F_1^{+}(\g(\ov\si(x),x))t_1^{+}(\T,x)
+\psi_1^{+}(-\ov\si(x),\T,(x,0),(0,0))\Big)+\CO(\e^2)\vspace{0.2cm}\\
\quad=\g_1(\ov\si(x),x)+\e \Bigg(\dfrac{F_1(\g(\ov\si(x),x))}{F_2(\g(\ov\si(x),x))}\psi_2^{+}(-\ov\si(x),\T+2\si,(x,0),(0,0))\vspace{0.2cm}\\
\quad\quad+ \psi_1^{+}(-\ov\si(x),\T,(x,0),(0,0))\Bigg)+\CO(\e^2),\quad \text{and}\\
\end{array}
\]
\[
\begin{array}{L}
\xi_1^{-}(t^{-}(\T,x;\e),\T,(x,0);\e)\vspace{0.2cm}\\
\quad=\g_1(\ov\si(x),x)+\e \Big(F_1^{-}(\g(\ov\si(x),x))t_1^{-}(\T,x)
+\psi_1^{-}(\ov\si(x),\T,(x,0),(0,0))\Big)+\CO(\e^2)\vspace{0.2cm}\\
\quad=\g_1(\ov\si(x),x)+\e \Bigg(-\dfrac{F_1(\g(\ov\si(x),x))}{F_2(\g(\ov\si(x),x))}\psi_2^{-}(\ov\si(x),\T,(x,0),(0,0))\vspace{0.2cm}\\
\quad\quad+ \psi_1^{-}(\ov\si(x),\T,(x,0),(0,0))\Bigg)+\CO(\e^2).\\
\end{array}
\]
Therefore, 
\[
\begin{array}{rl}
\dfrac{\CF_2(\T,x;\e)}{\e}=&\psi_1^{-}(\ov\si(x),\T,(x,0),(0,0))-\psi_1^{+}(-\ov\si(x),\T+2\si,(x,0),(0,0))\vspace{0.2cm}\\
&-\dfrac{F_1(\g(\ov\si(x),x))}{F_2(\g(\ov\si(x),x))}\Big(\psi_2^{+}(-\ov\si(x),\T+2\si,(x,0),(0,0))\vspace{0.2cm}\\
&+\psi_2^{-}(\ov\si(x),\T,(x,0),(0,0))\Big)+\CO(\e).
\end{array}
\]
Now, from \eqref{psii} we have that
\[
\begin{array}{L}
\psi_1^{-}(\ov\si(x),\T,(x,0),(0,0))-\psi_1^{+}(-\ov\si(x),\T+2\si,(x,0),(0,0))=\vspace{0.2cm}\\
\quad \Bigg\langle D_z\G_1(\ov\si(x),x,0)\,,\,\int_0^{\ov\si(x)}Y(t,x,0)^{-1}\big\{G^-,G^+\big\}_{\T}(t,\g(t,x))dt\Bigg\rangle,
\end{array}
\]
and 
\[
\begin{array}{L}
\psi_2^{-}(\ov\si(x),\T,(x,0),(0,0))+\psi_2^{+}(-\ov\si(x),\T+2\si,(x,0),(0,0))=\vspace{0.2cm}\\
\quad \Bigg\langle D_z\G_2(\ov\si(x),x,0)\,,\,\int_0^{\ov\si(x)}Y(t,x,0)^{-1}\big\{G^-,G^+\big\}_{\T}(t,\g(t,x))dt\Bigg\rangle,
\end{array}
\]
where $\{G^-,G^+\}_{\T}(t,z) = G^-(t+\T,z)+RG^+(-t+\T,Rz)$, see \eqref{rever}.

Since $Y^T\big(-F_2,F_1\big)^T=F_1D_z\G_2-F_2D_z\G_1$, we obtain
\[
\langle F_1D_z\G_1-F_2D_z\G_2\,,\,V\rangle=\big\langle \big(-F_2,F_1\big)\,,\,YV\big\rangle=F\wedge YV.
\]
Hence, we conclude that
\begin{equation}\label{melnikov1}
-F_2(\g(\ov\si(x),x))\CF_2(\T,x;\e)=\e M(\T,x)+\CO(\e^2).
\end{equation}
where $M(\T,x)$ is defined in \eqref{mel}.

From the construction of $\CF$ it is clear that a subharmonic crossing periodic solution of system \eqref{ps1} exists, for $\e>0$ sufficiently small, 
if, and only if, there are $\T_{\e}\in\s^1_{\si}$ and $x_{\e}\in(x_i,x_v)$ such that $\CF(\T_{\e},x_{\e};\e)=(0,0)$.

By hypothesis $\CF_1(\T,x_\si;0)=0$ and, from \eqref{eq:sigmaprima}, 
$$
\frac{\p\CF_1}{\p x}(\T,x_\si;0)=2 \ov\si'(x_\si)=\frac{\dfrac{\p \g_2}{\p x}(\ov\si(x_\si),x_\si)  }{F_2(\g(\ov\si(x_\si),x_\si)) }\neq0.
$$ 
Thus, by the {\it Implicit Function Theorem} there exists $x(\T;\e)$ such that $\CF_1(\T,x(\T;\e);\e)=0$ and $x(\T;\e)\to x_\si$
when $\e\to 0$ for every $\T\in\s^1_{\si}$.

Now, we take 
\[
\widetilde\CF(\T;\e)=-\dfrac{F^{+}_2\big(\g(\ov\si(x(\T;\e)),x(\T;\e))\big)}{\e}\CF_2(\T,x(\T;\e);\e).
\] 
From \eqref{melnikov1} the above equation reads
\[
\widetilde\CF(\T;\e)=M(\T,x(\T;\e))+\CO(\e)=M(\T,x_\si)+\CO(\e),
\]
By hypothesis there exists $\T^*\in\s^1_{\si}$ such that $\widetilde\CF(\T^*,0)=M(\T^*,x_\si)=0$ and 
$(\p \widetilde\CF/\p\T)(\T^*,0)=(\p M/\p\T)(\T^*,x_\si)\neq0$. 
Thus, applying again the {\it Implicit Function Theorem} we conclude that, for $\e>0$ sufficiently small, 
there exists $\T_\e\in \s^1_{\si}$ such that $\widetilde\CF(\T_\e;\e)=0$. Moreover, $\T_\e\to\T^*$ as $\e\to 0$. 
This concludes the proof of Theorem \ref{ta}. $\Box$

\subsection{Proof of statement {\bf (a)} of Theorem \ref{tb}}
Since $G^+_2(\T^*,p_v)\neq G^-_2(\T^*,p_v),$ we can assume that there exists $a,b\in[0,2\si_v]$ with $a<b$ and 
$\T^*\in(a,b)$ such that $G^+_2(t,p_v)\neq G^-_2(t,p_v)$ for every $t\in[a,b]$. Without loss of generality, we suppose that $G^+_2(t,p_v)<G^-_2(t,p_v)$ for every $t\in[a,b]$. At the end of the proof we shall comment the case when $G^+_2(t,p_v)>G^-_2(t,p_v)$ for every $t\in[a,b]$.

The above assumption and expression \eqref{fold} imply that $\ell_v^{+}(\T;\e)>\ell_v^{-}(\T;\e)$ for every $\T\in[a,b]$ 
and $\e>0$ sufficiently small.

Let $\CR_{\e}$ be the region on $\widetilde{\Sigma}\subset \s^1_{\si_v}\times\R$ delimited by the graphs 
$\ell_v^{\pm}(\T;\e)$ for $\T\in[a,b]$, that is $\CR_{\e}=\{(\T,x,0):\,\T\in[a,b],\,\ell_v^{-}(\T;\e)<x<\ell_v^{+}(\T;\e)\}$. 
A straightforward computation shows that this is a region of sliding type.  
Moreover, the autonomous vector field \eqref{s1b} is $2\si_v$--periodic in the variable $\T$, so the regions 
$\CR_{\e}^{n}=\{(\T+2n\si_v,x):\,(\T,x)\in\CR_{\e}\}$ for $n\in\N$ are of sliding type.

The expression of the sliding vector field for each region
 $\CR^{n}_{\e}$, $n\in\N$, is
\begin{equation}\label{ss}
\begin{array}{rll}
\T'=&\!\!\!u(\T,x;\e)=&\!\!\! 1,\vspace{0.2cm}\\
\e x'=&\!\!\!v(\T,x;\e)=&\!\!\!\dfrac{f_0(x)}{G^+_2(\T,x,0)-G^-_2(\T,x,0)}+\e\Bigg(\dfrac{f_1(\T,x)}{G^+_2(\T,x,0)-G^-_2(\T,x,0)}\vspace{0.2cm}\\
&&\!\!\!+f_0(x)\dfrac{H^-_2(\T,(x,0);\e)-H^+_2(\T,(x,0);\e)}{(G^+_2(\T,x,0)-G^-_2(\T,x,0))^2}\Bigg)+\CO(\e^2),
\end{array}
\end{equation}

where
\[
\begin{array}{rl}
f_0(x)=&2F_1(x,0)F_2(x,0),\vspace{0.3cm}\\
f_1(\T,x)=&F_2(x,0)\left(G^-_1(\T,x,0)-G^+_1(\T,x,0)\right)\vspace{0.2cm}\\
&+F_1(x,0)\left(G^+_2(\T,x,0)+G^-_2(\T,x,0)\right).
\end{array}
\]

System \eqref{ss} can be studied using singular perturbation theory  (see for instance \cite{F,J}). 
In this theory, system \eqref{ss} is known as {\it slow system}. 
Doing $\e=0$ we can find the {\it critical manifold} as 
\[
\CM_0=\{(\T,x)\in\CR_{\e}^{n}:\, f_0(x)=0\}=\{(\T+2n\si_v,x_v):\, \T\in[a,b]\}.
\]

Now, doing the time rescaling $t=\e \tau$, system \eqref{ss}, for $\e>0,$ becomes
\begin{equation}\label{fs}
\begin{array}{l}
\dot{\T}=\e u(\T,x;\e)=\e,\vspace{0.2cm}\\
\dot x=v(\T,x;\e),
\end{array}
\end{equation}
which is known as {\it fast system}. 
Computing the derivative with respect to the variable $x$ of the function $v$ for $\e=0$ 
at the points of $(\T+2n\si_v,x_v)\in\CM_0$ we obtain
\begin{equation}\label{ptheta}
\dfrac{\p v}{\p x}(\T+2n\si_v,x_v;0)=p(\T)=\dfrac{2F_1(p_v)\dfrac{\p F_2}{\p x}(p_v)}{G^+_2(\T,p_v)-G^-_2(\T,p_v)}>0,
\end{equation}
for every $\T\in[a,b]$, by hypothesis (h2) and the assumption $G^+_2(\T,p_v)<G^-_2(\T,p_v)$.
Therefore, for $\e=0$, $\CM_0$ is a normally hyperbolic repelling critical manifold for the vector field \eqref{fs} and also for the sliding vector field \eqref{ss}.

Applying {\it Fenichel's theorem} we conclude that there exists a normally hyperbolic repelling locally invariant manifold 
$\CM_{\e}=\{(\T+2n\si_v,m(\T;\e)):\,\T\in[a,b]\}$ of the system \eqref{fs},
which is $\e$--close to $\CM_0$:  
$$
m(\T;\e)=x_v+\e m_1(\T)+\CO(\e^2).
$$ 
Notice that $(\T(t)+2n\si_v,m(\T(t);\e))$ is a trajectory of system \eqref{ss}, so $v(\T+2n\si_v,m(\T;\e);\e)=\e\,(\p m/\p\T)(\T;\e).$ 
Accordingly, for $\e\geq0$ small enough, we may compute
\begin{equation}\label{m1}
m_1(\T)=-\dfrac{G^+_2(\T,p_v)+G^-_2(\T,p_v)}{2\dfrac{\p F_2}{\p x}(p_v)},
\end{equation}
therefore $\CM_{\e}\subset \CR_{\e}$.

Since the Fenichel's manifold is repelling we have that for a given point $(\T_0,\ell^-_v(\T_0;$ $\e))\in \p \CR^{n}_{\e}$ 
there exists a orbit $\delta(\T_0;\e)$ of the sliding vector field \eqref{ss} reaching the point $(\T_0,\ell^-_v(\T_0;\e))$ (see Figure \ref{slidingmethod}). 
In the sequel, we shall parametrize this orbit.

Given $N>0$, we want to compute the solution of system \eqref{ss} starting at $(\T,\ell^-_v(\T;\e))$, for $-N\e<t<0$.
equivalently, we compute the solution of system \eqref{fs} starting at the same point but for  $-N<\tau<0$. 

We denote by $\big(\T_s(\tau,\T;\e),x_s(\tau,\T;\e)\big)$ the solution of \eqref{fs} with initial condition: 
$\big(\T_s(0,\T;\e),x_s(0,\T;\e)\big)=(\T,\ell^-_v(\T;\e))$. 
Clearly $\T_s(\tau,\T;\e)=\T+\e \tau.$ Take $x_s(\tau,\T;\e)=x_v+\e k(\tau,\T)+\CO(\e^2)$. 
Expanding the both sides of the equality
\[
\dfrac{\p x_s}{\p \tau}(\tau,\T;\e)=v(\T+\e \tau,x_s(\tau,\T;\e);\e)
\]
in Taylor series respect to $\e$ we derive the following differential equation
\begin{equation}\label{k}
\begin{array}{RL}
\dfrac{\p k}{\p \tau}(\tau,\T)=&p(\T)k(\tau,\T)+F_1(p_v)\left(\dfrac{G_2^+(\T,p_v)+G_2^-(\T,p_v)}{G_2^+(\T,p_v)-G_2^-(\T,p_v)}\right)\vspace{0.2cm}\\
=&p(\T)k(\tau,\T)-m_1(\T)p(\T),\vspace{0.3cm}\\
k(0,\T)=& \nu_v^-(\T)=-\dfrac{G_2^{-}(\T,p_{v})}{\dfrac{\p F_2}{\p x}(p_{v})}=m_1(\T)+\dfrac{F_1(p_v)}{p(\T)},
\end{array}
\end{equation}
where $ \nu_v^-(\T),$  $p(\T),$ and $m_1(\T)$ are defined in \eqref{fold},  \eqref{ptheta}, and \eqref{m1}, respectively. 
The relation $p(\T)(\nu_v^-(\T)-m_1(\T))=F_1(p_v)$ has been used in order to get the above equalities. 
Solving the initial value problem  \eqref{k} we obtain
\begin{equation}\label{ksol}
k(\tau,\T)=m_1(\T)+\dfrac{F_1(p_v)}{p(\T)}e^{\tau p(\T)}.
\end{equation}

We have then found a set 
$$
\widetilde{\delta}(\T;\e)=\{\left(\T_s(\tau,\T;\e),x_s(\tau,\T;\e)\right):\,-N<\tau<0\}
$$ 
parametrized by $\tau,$ which is contained in the orbit $\delta(\T;\e)$.

From here, the idea of the proof is analogous to the proof of Theorem \ref{ta}, 
which consists in defining a function 
$\CF:(a,b)\times(-N,0)\rightarrow\R^2$ 
that allows us to determine the existence of sliding periodic solutions of system \eqref{ps1}. 
Given $\T\in(a,b),$ we consider the flows 
\[
\Phi^-\big(t,\T,\ell_v^-(\T;\e),0;\e\big)\,\, \text{and}\,\,
\Phi^+\big(t,\T_s(\tau, \T+2\si_v;\e),x_s(\tau, \T+2\si_v;\e),0;\e\big).
\]
The vector field \eqref{s1b} is $2\si_v$-- periodic in the variable $\T$, which means that $\T\equiv \T+2\si_v$. 
Thus, if for some $\T_*\in[0,2\si_v]$ and $\tau_*\in(-N,0)$ there exist $s_*^{-}\ge 0$ and $s_*^{+}\le 0$ such that 
$$
\Phi^-\big(s_*^{-},\T_*,\ell_v^-(\T_*;\e),0;\e\big)=
\Phi^+\big(s_*^{+},\T_s(\tau_*,\T_*+2\si_v;\e),x_s(\tau_*,\T_*+2\si_v;\e),0;\e\big)\in\Sigma,
$$ 
then there exists a sliding $2\si_v$--periodic solution of system \eqref{s1b} and, consequently, of system \eqref{ps1} (see Figure \ref{slidingmethod}).
\begin{figure}[h]
\begin{center}
\begin{overpic}[width=12cm]{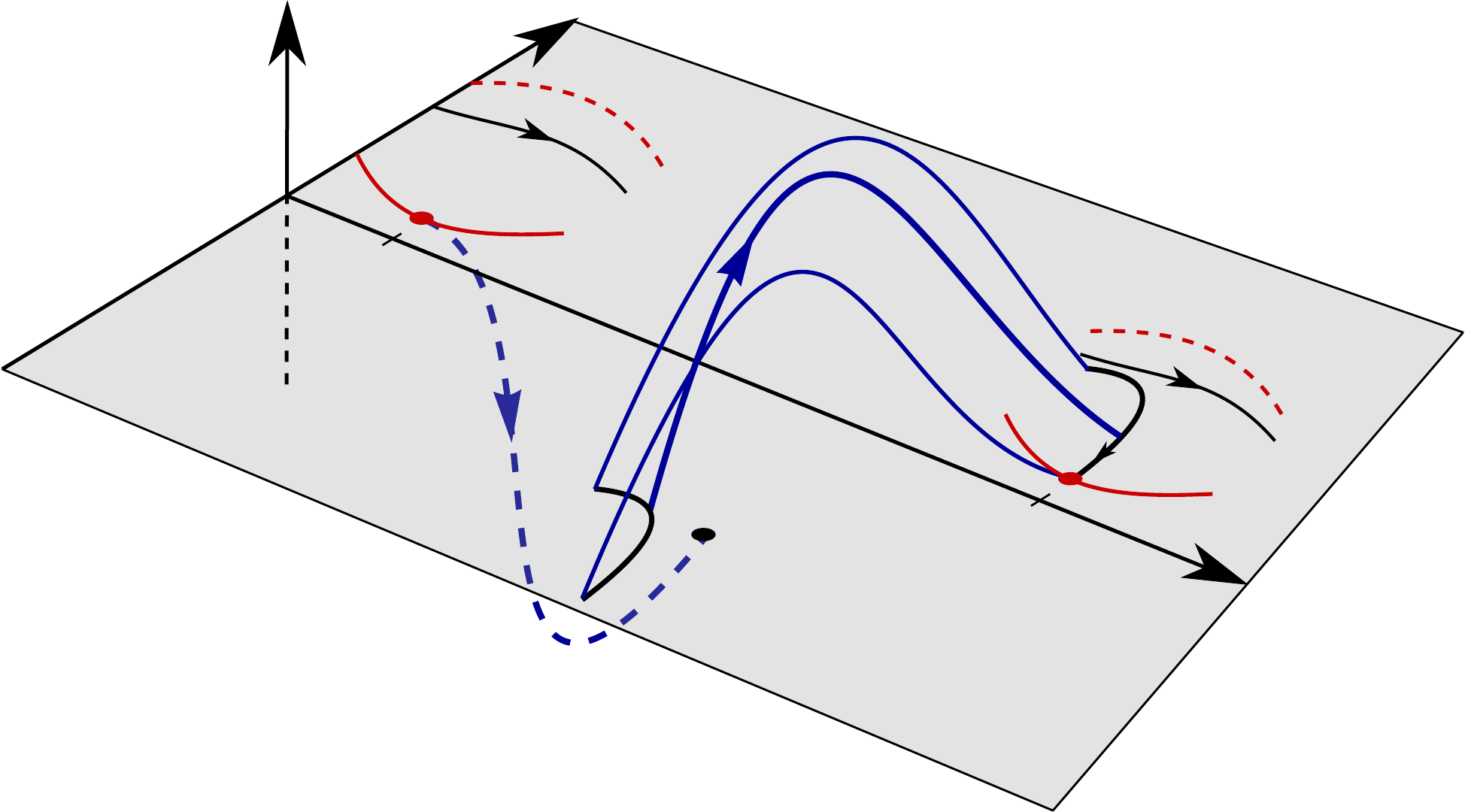}
\put(5,30){$\Sigma$} 
\put(28,50){$\ell_v^+$}
\put(21,46.5){$\ell_v^-$}
\put(86,14){$\T$}
\put(40,55){$x$}
\put(19.5,56.5){$y$}
\put(23,37){\rotatebox{-39}{$\T^*$}}
\put(41,39.5){$\CM_\e$}
\put(77.5,24){$\tilde\de\subset\de$}
\put(63,20){\rotatebox{-23}{$\T^*+2\si_v$}}
\end{overpic}
\end{center}

\bigskip

\caption{Methodology for constructing the displacement function to detect sliding $2\si_v$--periodic solutions of system \eqref{ps1}, 
for $\e>0$ sufficiently small.}\label{slidingmethod}
\end{figure}

Again, analogously to the proof of Theorem \ref{ta}, we can use the {\it implicit function theorem} to find times 
$s^{-}(\T;\e)> 0$ and 
$s^{+}(\tau, \T;\e)< 0$
such that 
\[
\begin{array}{l}
\xi^-_2\big(s^-(\T;\e),\T,\ell_v^-(\T;\e),0;\e\big)=0 \quad \text{and} \vspace{0.2cm}\\
\xi^+_2\big(s^+(\tau, \T;\e),\T_s(\tau, \T+2\si_v;\e),x_s(\tau,\T+2\si_v;\e),0;\e\big)=0.
\end{array}
\]
Moreover, using the expression for $\xi^-_2$ given in Lemma \ref{fl}  with $\T=\T,$ $z_0=p_v$, and 
$z_1=(\nu_v^-(\T),0),$ where $\nu_v^-(\T)$ 
is given in \eqref{fold}, we obtain that $s^{-}(\T;\e)=\si_v+\e s_1^{-}(\T)+\CO(\e^2)$ provided that
\begin{equation}\label{simenos}
s_1^{-}(\T)=-\dfrac{\psi_2^{-}(\si_v,\T,p_v,(\nu_v^-(\T),0))}{F_2\left(q_v\right)},
\end{equation}
where $q_v$ is defined in \eqref{eq:qv}.
Analogously, using the expression for $\xi^+_2$ given in Lemma \ref{fl}  with 
$\T=\T+2\si_{v},$ $z_0=p_v$, and $z_1=(k(\tau,\T),0),$ where $k(\tau,\T)$ 
is given in \eqref{ksol}, we obtain 
$s^{+}(\tau, \T;\e)=-\si_v+\e s^{+}_1(\tau,\T)+\CO(\e^2)$ provided that
\begin{equation}\label{siplus}
s_1^{+}(\tau,\T)=-\dfrac{\psi_2^{+}(-\si_v,\T,p_v,(k(\tau,\T),0))}{F_2\left(q_v\right)}.
\end{equation}
Moreover, from \eqref{psi2}, we get
\begin{equation}\label{psii2}
\begin{array}{L}
\psi_i^{-}(\si_v,\T,p_v,(\nu_v^-(\T),0))=\dfrac{\p\Gamma_i}{\p x}(\si_v,p_v)\nu_v^-(\T)+\vspace{0.2cm}\\
\qquad \Big\langle D_z \Gamma_i(\si_v,p_v)\,,\,\int_0^{\si_v}
\!\!\!\!\!\! Y(s,p_v)^{-1}G^{-}\left(s+\T,\g(s,x_v)\right)ds\Big\rangle,\vspace{0.3cm}\\

\psi_i^{+}(-\si_v,\T,p_v,(k(\tau,\T),0))=(-1)^{i+1}\dfrac{\p\Gamma_i}{\p x}(\si_v,p_v)k(\tau,\T) +\vspace{0.2cm}\\
\qquad\Big\langle D_z \Gamma_i(\si_v,p_v)\,,(-1)^{i}\int_0^{\si_v}
\!\!\!\!\!\! Y(s,p_v)^{-1}RG^{+}\left(-s+\T,R\g(s,x_v)\right)ds\Big\rangle.
\end{array}
\end{equation}
for $i=1,2$. 

Accordingly, define $\CG(\tau,\T;\e)=(\CG_1(\tau,\T;\e),\CG_2(\tau,\T;\e))$ 
as
\[
\begin{array}{rl}
\CG_1(\tau,\T;\e)=&\pi_{\T}\Phi^-(s^-(\T;\e),\T,(\ell_v^-(\T;\e),0);\e)\vspace{0.2cm}\\
&-\pi_{\T}\Phi^+(s^{+}(\tau,\T;\e),\T_s(\tau,\T+2\si_v;\e),(x_s(\tau,\T+2\si_v;\e),0);\e)\vspace{0.3cm}\\
=&s^-(\T;\e)+\T-s^+(\tau,\T;\e)-\T_s(\tau,\T+2\si_v;\e)\vspace{0.3cm}\\
=&\e(s_1^-(\T)-s_1^+(\tau,\T)-\tau)+\CO(\e^2),\vspace{0.4cm}\\

\CG_2(\tau,\T;\e)=&\pi_{x}\Phi^-(s^-(\T;\e),\T,(\ell_v^-(\T;\e),0);\e)\vspace{0.2cm}\\
&-\pi_{x}\Phi^+(s^{+}(\tau,\T;\e),\T_s(\tau,\T+2\si_v;\e),(x_s(\tau,\T+2\si_v;\e),0);\e)\vspace{0.3cm}\\
=&\xi_1^-(s^-(\T;\e),\T,(x_{v}+\e \nu^{-}_{v}(\T),0);\e)\vspace{0.2cm}\\
&-\xi_1^+(s^{+}(\tau,\T;\e),\T+2\si_v+\e\tau,(x_v+\e k(\tau,\T),0);\e).\vspace{0.3cm}\\
\end{array}
\]
To compute the function $\CG_2$,  first we see that
\begin{equation*}\label{sms}
\begin{array}{L}
F_2(q_v)\big(s_1^-(\T)-s_1^+(\tau,\T)\big)\vspace{0.2cm}\\
\qquad=\psi_2^{+}(-\si_v,\T,p_v,(k(\tau,\T),0))-\psi_2^{-}(\si_v,\T,p_v,(\nu_v^-(\T),0))\vspace{0.2cm}\\
\qquad=-\dfrac{\p\Gamma_2}{\p x}(\si_v,p_v)\big(k(\tau,\T)+\nu_v^-(\T)\big)-g_\T\vspace{0.2cm}\\
\qquad=-\dfrac{F_2(q_v)}{F_1(p_v)}\big(k(\tau,\T)+\nu_v^-(\T)\big)-g_\T,
\end{array}
\end{equation*}
where $g_\T$ is defined in \eqref{g}. 
To obtain the above expression we have used Lemma \ref{l2} and expression \eqref{psii2}. Therefore,
\[
\CG_1(\tau,\T;\e)=-\dfrac{\e}{F_2(q_v)}\left(F_2(q_v)\tau +\dfrac{F_2(q_v)}{F_1(p_v)}\big(k(\tau,\T)+\nu_v^-(\T)\big)+g_\T\right)+\CO(\e^2).
\]

We compute $\CG_2(\tau,\T;\e).$ From Lemma \ref{fl} and expressions \eqref{simenos}, \eqref{siplus}, and \eqref{psii2} we get
\[
\begin{array}{L}
\xi_1^-(s^-(\T;\e),\T,(x_{v}+\e \nu^{-}_{v}(\T),0);\e)\vspace{0.3cm}\\
\quad=\g_1(\si_v,x_v)+\e \Big(F_1^{-}(q_v)s_1^{-}(\T)
+\psi_1^{-}(\si_v,\T,p_v,(\nu_v^-(\T),0))\Big)+\CO(\e^2)\vspace{0.3cm}\\
\quad=\g_1(\si_v,x_v)+\e \Bigg(-\dfrac{F_1(q_v)}{F_2(q_v)}\psi_2^{-}(\si_v,\T,p_v,(\nu_v^-(\T),0))\vspace{0.2cm}\\
\quad\quad+ \psi_1^{-}(\si_v,\T,p_v,(\nu_v^-(\T),0))\Bigg)+\CO(\e^2),\vspace{0.4cm}\\
\end{array}
\]
where $\gamma$ is given in \eqref{gamma}, and
\[
\begin{array}{L}
\xi_1^+(s^{+}(\tau,\T;\e),\T+2\si_v+\e\tau,(x_v+\e k(\tau,\T),0);\e)\vspace{0.3cm}\\
\quad=\g_1(\si_v,x_v)+\e \Big(F_1^{+}(q_v)s_1^{+}(\tau,\T)+\psi_1^{+}(-\si_v,\T,p_v,(k(\tau,\T),0))\Big)+\CO(\e^2)\vspace{0.3cm}\\
\quad=\g_1(\si_v,x_v)+\e \Bigg(\dfrac{F_1(q_v)}{F_2(q_v)}\psi_2^{+}(-\si_v,\T,p_v,(k(\tau,\T),0))\vspace{0.2cm}\\
\quad\quad+ \psi_1^{+}(-\si_v,\T,p_v,(k(\tau,\T),0))\Bigg)+\CO(\e^2).\vspace{0.4cm}\\
\end{array}
\]
Thus, 
\[
\begin{array}{L}
\dfrac{\CG_2(\tau,\T;\e)}{\e}=\psi_1^{-}(\si_v,\T,p_v,(\nu_v^-(\T),0))-\psi_1^{+}(-\si_v,\T,p_v,(k(\tau,\T),0))\vspace{0.2cm}\\
\quad-\dfrac{F_1(q_v)}{F_2(q_v)}\Big(\psi_2^{-}(\si_v,\T,p_v,(\nu_v^-(\T),0))+\psi_2^{+}(-\si_v,\T,p_v,(k(\tau,\T),0))\Big)+\CO(\e).
\end{array}
\]
From \eqref{psii2} we have
\[
\begin{array}{L}
\psi_1^{-}(\si_v,\T,p_v,(\nu_v^-(\T),0))-\psi_1^{+}(-\si_v,\T,p_v,(k(\tau,\T),0))=\vspace{0.2cm}\\
\qquad \dfrac{\p\G_1}{\p x}(\si_v,p_v)\big(\nu^-_v(\T)-k(\tau,\T)\big)+\vspace{0.2cm}\\
\qquad \Bigg\langle D_z\G_1(\si_v,p_v)\,,\,\int_0^{\si_v}Y(t,p_v)^{-1}\big\{G^-,G^+\big\}_{\T}(t,\g(t,x_v))dt\Bigg\rangle,
\end{array}
\]
and 
\[
\begin{array}{L}
\psi_2^{-}(\si_v,\T,p_v,(\nu_v^-(\T),0))+\psi_2^{+}(-\si_v,\T,p_v,(k(\tau,\T),0))=\vspace{0.2cm}\\
\qquad\dfrac{\p\G_2}{\p x}(\si_v,p_v)\big(\nu^-_v(\T)-k(\tau,\T)\big)+\vspace{0.2cm}\\
\qquad \Bigg\langle D_z\G_2(\si_v,p_v)\,,\,\int_0^{\si_v}Y(t,p_v)^{-1}\big\{G^-,G^+\big\}_{\T}(t,\g(t,x_v))dt\Bigg\rangle.
\end{array}
\]
Similar to the proof of Theorem \ref{ta} we obtain that
\[
-F_2(q_v)\CG_2(\tau,\T;\e)=\e \big(\nu^-_v(\T)-k(\tau,\T)\big)F(q_v)\wedge\dfrac{\p\Gamma}{\p x}(\si_v,p_v)+ \e M(\T,x_v)+\CO(\e^2),
\]
where $M(\T,x)$ is defined in \eqref{mel}. As a direct consequence of Lemma \ref{l2} we have that the above wedge product vanishes. Therefore,
\begin{equation*}\label{melnikov2}
\CG_2(\tau,\T;\e)=-\dfrac{\e}{F_2(q_v)}  M(\T,x_v)+\CO(\e^2).
\end{equation*}

Now, consider the function
\[
\widetilde\CG(\tau,\T;\e)=-\dfrac{F_2(q_v)}{\e}\CG(\tau,\T;\e)=\Big(\widetilde\CG_1(\tau,\T)\,,\,\widetilde\CG_2(\T)\Big)+\CO(\e).
\]
Thus,
\[
\begin{array}{RL}
\widetilde\CG_1(\tau,\T)=&F_2(q_v)\tau+\dfrac{F_2(q_v)}{F_1(p_v)}\big(k(\tau,\T)+\nu_v^-(\T)\big)+g_\T,\vspace{0.2cm}\\
\widetilde\CG_2(\T)=&M(\T,x_v).
\end{array}
\]
By hypothesis there exists $\T^*\in\s^1_{\si_v}$ such that $M(\T^*,x_v)=0$ and $(\p M/\p\T)$ $(\T^*,x_v)\neq 0$. 
Now, we note that the equation $\widetilde\CG_1(\tau,\T^*)=0$ is equivalent, using \eqref{ksol},  to the equation
\begin{equation}\label{e1}
\tau+\dfrac{1}{p(\T^*)}e^{\tau p(\T^*)}+A(\T^*)=0 \quad \text{where}\quad A(\T)=\dfrac{m_1(\T)+\nu_v^-(\T)}{F_1(p_v)}+\dfrac{g_\T}{F_2(q_v)},
\end{equation}
where $p(\T)$ and $m_1(\T)$ are defined in \eqref{ptheta} and \eqref{m1}, respectively. Since $p(\T^*)>0$, equation \eqref{e1} becomes
\[
r(\tau)e^{r(\tau)}=e^{-A(\T^*)p(\T^*)}\quad \text{with}\quad r(\tau)=-(\tau+A(\T^*))p(\T^*),
\]
which admits a unique real solution
\[
\tau^*=-A(\T^*)-\dfrac{1}{p(\T^*)}W\left(e^{-A(\T^*)p(\T^*)}\right).
\]
Here, $W$ denotes the Lambert W--function ($x=W(y)$  gives the solution of $ x e^x=y$, for a definition see \cite{C}).
From the properties of the W--function, we know that $W(e^{\beta})>\beta$ if, and only if, $\beta<1$. 
Then, we obtain that $\tau^*<0$ if, and only if, $A(\T^*)p(\T^*)>-1$. 
This follows from the hypothesis {\bf (a)} of the theorem, which reads 
\[
g_{\T^*}>\dfrac{2F_2(q_v)}{F_1(p_v)\dfrac{\p F_2}{\p x}(p_v)}G_2^-(\T^*,p_v).
\]

Accordingly, we take $N=-2\tau^*$ in order to have $(\tau ^*,\T^*)\in (-N,0)\times (a,b)$ and $\widetilde\CG(\tau ^*, \T^*,0)=0$. 
Moreover,
\[
\begin{array}{RL}
\det\left(\dfrac{\p\widetilde\CG}{\p(\tau,\T)}(\tau ^*,\T^*,0)\right)=
&\left|\begin{array}{cc}   F_2(q_v)\left(1+e^{\tau^*p(\T^*)}\right) & ^\#\\
0 & \dfrac{\p M}{\p\T} (\T^*,x_v) \end{array}\right|\vspace{0.2cm}\\
=&F_2(q_v)\left(1+e^{\tau^*p(\T^*)}\right)\dfrac{\p M}{\p\T}(\T^*,x_v)\neq 0.
\end{array}
\]
Thus, applying the {\it implicit function theorem}  we conclude that for $\e>0$ sufficiently small there exist 
$\T_\e\in(a,b)$ and $\tau_\e\in(-N,0)$ 
such that $\CG(\tau_\e,\T_\e;\e)=\widetilde\CG(\tau_\e,\T_\e;\e)=0$ and $\T_\e\to\T^*$ and $\tau_\e\to\tau^*$ when $\e\to 0$.

For the case when $G^+_2(t,p_v)>G^-_2(t,p_v)$ for every $t\in[a,b]$ the same argument works  reversing time. 
Therefore, in this case,  the obtained sliding periodic solutions slide on $\Sigma^e$. This concludes the proof of item {\bf (a)} of the Theorem \ref{tb}. $\Box$

\subsection{Proof of statement {\bf (b)} of Theorem \ref{tb}}
Let $K\subset \s^1_{\si_v}\times\R$ be the set of pairs $(\T,\chi)$ such that $\chi<\min\{\nu^{\pm}(\T)\}$. 
Clearly, in this case, $\zeta_\e=x_v+\e \chi<\ell_v^{\pm}(\T;\e)$, and therefore the solutions of system \eqref{auxs1} cross the set of discontinuity 
$\widetilde\Sigma$ at the points $(\T,(\zeta_\e,0))$ when $(\T,\chi)\in K$. 

In what follows, we define a function $\CH:K\times(0;\e_0)\rightarrow\R^2$ such that its zeros determine the existence of crossing periodic solutions near 
the separatrix $\CS$. 
Given $(\T,\zeta_\e)\in K,$ we consider the flows $\Phi^-\big(t,\T,(\zeta_\e,0);\e\big)$ and $\Phi^+\big(t,\T+2\si_v,(\zeta_\e,0);\e\big)$. 
The existence of times $r^-=r^-(\T,\chi;\e)>0$ and $r^+=r^+(\T,\chi;\e)<0$  such that 
$$
\xi^-_2\big(r^-,\T,(\zeta_\e,0);\e\big)=0, \quad \xi^+_2\big(r^+,\T+2\si_v,(\zeta_{\e},0);\e\big)=0
$$ 
is guaranteed by the {\it implicit function theorem}. 
These times can be computed analogously to \eqref{simenos} and \eqref{siplus} as 
$$
r^{-}(\T,\chi;\e)=\si_v+\e r_1^{-}(\T,\chi)+\CO(\e^2)\quad
r^{+}(\T,\chi;\e)=-\si_v+\e r_1^{+}(\T,\chi)+\CO(\e^2),
$$ 
where
\begin{equation*}\label{ri}
r_1^{\pm}(\T,\chi)=-\dfrac{\psi_2^{\pm}(\mp\si_v,\T,p_v,(\chi,0))}{F_2(q_v)},
\end{equation*}
but here we have used the formula of Lemma \ref{fl} for $z_0=p_v$, and $z_1=(\chi,0).$ 

Accordingly, define $\CH(\T,\chi;\e)=(\CH_1(\T,\chi;\e),\CH_2(\T,x;\e))$ as
\[
\begin{array}{rl}
\CH_1(\T,\chi;\e)=&\pi_{\T}\Phi^-(r^-(\T,\chi;\e),\T,(\zeta_\e,0);\e)-\pi_{\T}\Phi^+(r^+(\T,\chi;\e),\T+2\si_v,(\zeta_\e,0);\e)\vspace{0.2cm}\\
=&r^-(\T,\chi;\e)-r^+(\T,\chi;\e)-2\si_v\vspace{0.2cm}\\
=&\e(r_1^-(\T,\chi)-r_1^+(\T,\chi))+\CO(\e^2),\vspace{0.3cm}\\

\CH_2(\T,\chi;\e)=&\pi_{x}\Phi^-(r^-(\T,\chi;\e),\T,(\zeta_\e,0);\e)-\pi_{x}\Phi^+(r^+(\T,\chi;\e),\T+2\si_v,(\zeta_\e,0);\e)\vspace{0.2cm}\\
=&\xi_1^-(r^-(\T,\chi;\e),\T,(x_v+\e \chi,0);\e)\vspace{0.2cm}\\
&-\xi_1^+(r^+(\T,\chi;\e),\T+2\si_v,(x_v+\e \chi,0);\e)+\CO(\e^2).\vspace{0.3cm}\\
\end{array}
\]
From the construction of $\CH$ it is clear that a crossing $2\si_v$--periodic solution of system  \eqref{ps1} exists if, and only if, we find 
$(\T_\e,\chi_\e)\in K$ such that $\CH(\T_\e,\chi_\e;\e)=(0,0)$. 
To compute $\CH$ we proceed analogously to the proof of statement {\bf (a)} of Theorem \ref{tb}, but now using the expressions just obtained for 
$r_1^{\pm}$ and again Lemma \ref{fl} with $z_0=p_v$, and $z_1=(\chi,0)$, obtaining 
\[
\begin{array}{l}
\CH_1(\T,\chi;\e)=-\e\left(\dfrac{2\chi}{F_1(p_v)}+\dfrac{g_{\T}}{F_2(q_v)}\right)+\CO(\e^2),\vspace{0.2cm}\\

\CH_2(\T,\chi;\e)=-\e \dfrac{M(\T,x_v)}{F_2(q_v)}+\CO(\e^2).
\end{array}
\]
We define 
$$
\widetilde\CH(\T,\chi;\e)=-\frac{F_2(q_v)}{\e}\CH(\T,\chi;\e).
$$ 
By hypothesis there exists $\T^*\in\s^1$ such that $M(\T^*,x_v)=0$ and $(\p M\p\T)$ $(\T^*,x_v)\neq 0$. 
Therefore, for $\chi^*=-F_1(p_v)g_{\T^*}/(2F_2(q_v))$ we get
\[
\begin{array}{RL}
\det\left(\dfrac{\p\widetilde\CH}{\p(\T,\chi)}(\T^*,\chi^*,0)\right)=&\left|\begin{array}{cc} ^\# &\dfrac{2F_2(q_v)}{F_1(p_v)}\\\dfrac{\p M}{\p\T} (\T^*,x_v)&0 
\end{array}\right|\vspace{0.2cm}\\
=&-\dfrac{2F_2(q_v)}{F_1(p_v)}\dfrac{\p M}{\p\T}(\T^*,x_v)\neq 0.
\end{array}
\]
Applying the {\it implicit function theorem} it follows that, for $\e>0$ sufficiently small, there exist $\T_\e=\T^*+\CO(\e)\in(a,b)$ and 
$\chi_\e=\chi^*+\CO(\e)$ such that 
$\CH(\T_\e,\chi_\e;\e)=\widetilde\CH(\T_\e,\chi_\e;\e)=0$. 
Furthermore, $(\T_\e,\chi_\e)\in K$. 
Indeed, the hypothesis {\bf (b)} of the theorem, which reads 
\[
g_{\T^*}<\dfrac{2F_2(q_v)}{F_1(p_v)\dfrac{\p F_2}{\p x}(p_v)}\max\big\{G^{\pm}_2(\T^*,p_v)\big\}
\]
and \eqref{fold} implies that $\chi^*<\min\{\nu^{\pm}(\T^*)\}$. 
Consequently, $\chi_\e <\min\{\nu^{\pm}(\T_\e)\}$ for $\e>0$ small enough.
This concludes the proof of statement {\bf (b)} of Theorem \ref{tb}. $\Box$

\section{Conclusions and further directions}\label{sec:conc}

In this paper, we have considered a reversible planar  Filippov system $Z_0$ having a simple two-fold cycle $\CS$.
 The reversibility property forces $\CS$ always to be the boundary of a period annulus $\mathcal{A}$ of crossing periodic orbits.  Our main goal consisted in understanding how such a simple two-fold cycle $\CS$  unfolds under small non-autonomous periodic perturbations $Z_{\e}$ of $Z_0.$

As usual, the perturbation $Z_{\e}$ was assumed to be periodic with the same period of some of the periodic orbits in $\CS\cup\CA$. 
Then, generic conditions were provided  guaranteeing the persistence of such a periodic solution (see Theorems A and B). 
The construction of a suitable displacement function and its related Melnikov function was the central tools behind our study.  
For periodic solutions bifurcating from the period annulus $\CA,$ the Melnikov function was obtained, as usual, by expanding such a displacement function around $\e=0$.  
However, this approach fails when facing sliding dynamics, which appears, for instance, in the unfolding of two-fold singularities. Hence, the main novelty of this study consisted in developing a procedure for detecting the existence of sliding and crossing periodic solutions bifurcating from the simple two-fold connection $\CS,$ where the sliding dynamics must be taken into account. In particular, the detection of sliding periodic solutions is rather different, because regular perturbations of a Filippov system produce singular perturbation problems in the sliding dynamics. 
Accordingly,  tools from singular perturbation theory had to be employed.

The study of global phenomena in Filippov systems, specially polycycles such as simple two-fold cycles, is rather recent (see, for instance, \cite{AGN,NT,NTZ}). The procedure that we have developed in this paper can be applied for a wide range of polycycles in reversible Filippov systems. For instance, polycycles formed by trajectories containing several two-fold singularities. Allowing more tangential singularities increases the degeneracy of the problem, and one could certainly expect a richer dynamics bifurcating from the polycycle.

Another possible issue is to consider higher dimensional systems, such as generic cusp-cusp singularities in 3D reversible Filippov systems. If the fixed set of the involution coincides with the switching manifold, then such a system has a topological cylinder foliated by simple two-fold connections. Thus, the ideas developed in this paper could be applied for studying the bifurcation of crossing and sliding periodic solutions from this cylinder.

\section*{Acknowledgments}

We thank the referees for their comments and suggestions which helped us to improve the presentation of this paper.

DDN has been partially supported by FAPESP grants 2018/16430-8 and 2019/10269-3, and by CNPq grant 306649/2018-7. 
TMS has been supported by the Spanish MINECO-FEDER Grant PGC2018-098676-B-100 (AEI/FEDER/UE) and the Catalan grant 2017SGR1049, and  the Catalan Institution for Research and Advanced Studies via an ICREA Academia Prize 2019. This material is based upon work supported by the National Science Foundation under Grant No. DMS-1440140 while TMS was in residence at the Mathematical Sciences Research Institute in Berkeley, California, during the Fall 2018 semester. 
IOZ has been partially supported by FAPESP grant 2013/ 21078-8. 
DDN, MAT, and IOZ have been partially supported by FAPESP grant 2018/ 13481-0. DDN and IOZ have been partially supported by CNPq grant 438975/ 2018-9.
\bibliographystyle{abbrv}
\bibliography{references.bib}

\end{document}